\def\bfs{\boldsymbol}
\def\d{\text{d}}
\newtheorem{lemma}{Lemma}
\numberwithin{equation}{section}
\journal{Journal of Computational and Applied Mathematics}
\begin{document}

\begin{frontmatter}



\title{Application of a conservative, generalized multiscale finite element method to flow models}


\author{Lawrence Bush$^{2,*}$, Victor Ginting$^{2}$, and Michael Presho$^{1}$}

\address{{\it $^1$ Institute for Scientific Computation, Department of Mathematics, Texas A\&M University, College Station, TX 77843.}\\
{\it $^2$ Department of Mathematics, University of Wyoming, Laramie, WY 82071.}\\

\footnotetext{$^*$ Correspondence to: Lawrence Bush, University of Wyoming, Email: lbush4@uwyo.edu} 
}

\begin{abstract}
In this paper we propose a method for the construction of locally conservative flux fields from Generalized Multiscale Finite Element Method (GMsFEM) pressure solutions. The flux values are obtained from an element-based postprocessing procedure in which an independent set of $4 \times 4$ linear systems need to be solved. To test the performance of the method we consider two heterogeneous permeability coefficients and couple the resulting fluxes to a two-phase flow model. The increase in accuracy associated with the computation of the GMsFEM pressure solutions is inherited by the postprocessed flux fields and saturation solutions, and is closely correlated to the size of the reduced-order systems. In particular, the addition of more basis functions to the enriched coarse space yields solutions that more accurately capture the behavior of the fine scale model. A number of numerical examples are offered to validate the performance of the method. 
\end{abstract}

\begin{keyword}
Generalized multiscale finite element method \sep flux conservation \sep two-phase flow \sep postprocessing
\end{keyword}

\end{frontmatter}







\section{Introduction} 
\label{intro}
Many physical processes in science and engineering are described by partial differential equations whose coefficients vary over many length scales. Typical examples may include subsurface flows where the permeability of the porous medium is represented by a high-contrast, heterogeneous coefficient. In recent decades, multiscale methods have been introduced as an effective tool for treating these types of problems \cite{akl06,apwy07,eh09,hw97,hfmq98,jlt03}. An important component of this class of methods is the independent construction of a set of multiscale basis functions that span a solution space that is tied to a coarse
grid, i.e., one whose discretization parameter is much larger than the characteristic scale of the heterogeneous
coefficient. In particular, once a precomputed set of basis functions is available, a specified global coupling mechanism may be used in order to obtain the associated coarse scale solution. As the fine scale information is imbedded into the basis functions, a coarse grid solution inherits the fine scale effects of the underlying system.
In other words, the multiscale basis functions offer a direct method of projecting a coarse solution to the fine grid.
The present paper considers a class of multiscale methods that will be used to effectively solve elliptic pressure equations that appear in a two-phase flow model. 

While standard multiscale methods have proven effective for a variety of applications (see, e.g., \cite{eghe06,hw97,jlt03}), we employ a more recent framework in which the coarse space may be systematically enriched so that the approximate solution sought in it converges to the fine grid solution. The enrichment procedure hinges on the construction of localized spectral problems, where dominant eigenfunctions are used in the construction of the enriched space \cite{egw11,ge10part2}. This type of spectral enrichment allows for the number basis functions (and the size of the coarse space) to be flexibly chosen such that a desired level of numerical accuracy may be achieved. This framework, which is coined as the Generalized Multiscale Finite Element Method (GMsFEM), incorporates the enriched solution space  into a continuous Galerkin (CG) global formulation in order to obtain approximate pressure solutions. 

An advantage of employing a continuous Galerkin multiscale formulation is the relative ease of implementation and resemblence to standard finite element variational formulation. However, a well known limitation of CG is that the resulting solution does not satisfy local conservation. In particular, in the cases when it is necessary to couple the resulting fluxes to a transport equation, local conservation is required. While finite volume-type methods, mixed methods, and discontinuous Galerkin methods typically guarantee conservation \cite{akl06,d03,eghe06}, the respective formulations yield systems that are more delicate (and sometimes larger) than the CG counterpart. 
Furthermore, to the best of our knowledge the blend of enrichment techniques with multiscale methods are still
at its infancy as there has not been any attempt to carry out the formulation using other than continuous Galerkin
formulation (see, e.g., \cite{eglp13} for a recent development using discontinuous Galerkin method).
As a result, we consider the alternative of postprocessing a global CG solution in order to obtain the desired conservation. Numerous methods have been proposed in order to postprocess finite element solutions to obtain conservative fluxes (see, e.g., \cite{cgw07,heml00,lrm95,sl09}), however, in this paper we generalize the procedure offered in \cite{bg13} in which the authors perform a global solve and subsequent element-based computations to achieve conservation.

In this paper we propose a technique that provides flux conservation in the context of GMsFEM. For
rectangular finite elements, our method hinges on a postprocessing technique in which independent $4 \times 4$
systems of equations are solved on each coarse element to obtain the conservative fluxes. We note that
similar derivation can be accomplished for triangular finite elements that yields an independent $3 \times 3$ system of equations. While the postprocessing procedure yields conservative fluxes on the coarse scale (which might suffice for some target applications), we also employ an independent downscaling procedure to construct a conservative flux field on the underlying fine grid. We note that coarse scale conservative discontinuous Galerkin GMsFEM formulations have been used in (see, e.g., \cite{eglp13}), yet emphasize that the method proposed in this paper requires no modification to the original CG formulation and allows for the fluxes to be computed on the fine grid. Furthermore, to our knowledge, conservative GMsFEM-type methods have not yet been incorporated for solving multi-phase flow models in the existing literature. To test the performance of the proposed method we solve a standard two-phase flow model using distinct cases of high-contrast permeability coefficients. In all cases, an increase in the dimension of the coarse solution space yields solutions that are shown to more accurately capture the behavior of the fine scale. In particular, the error decline of the elliptic solution (which has been rigorously analyzed in \cite{egw11}), is directly inherited by the resulting flux values and two-phase saturation solutions. 

The rest of the paper is organized as follows. In Sect.~\ref{model} we introduce a standard two-phase model along with a description of the operator splitting technique that is used for solving the model. In Sect.~\ref{sec:gmsfem} we describe the Generalized Multiscale Finite Element Method (GMsFEM), and follow the construction by introducing the procedure for the computation of postprocessed, conservative flux quantities in Sect.~\ref{sec:postprocess}. A variety of numerical tests are offered in Sect.~\ref{numerical} in order to validate the performance the proposed method. To finish the paper we offer some concluding remarks in Sect.~\ref{conclusion}.

\section{Model problem}
\label{model}

\subsection{Two-phase model} \label{sec:twophase}
We consider a heterogeneous oil reservoir which is confined in a domain $\Omega$. The reservoir is equipped with an injection well, from which water is discharged to displace the trapped oil towards the production wells, situated on the perimeter of the domain. 
The dynamics of the movement of the fluids in the reservoir are categorized as an immiscible two-phase system with water and oil (denoted by $w$ and $o$, respectively) that is incompressible. Capillary pressure is not included in the model. Further simplifying assumptions that we use are a gravity-free environment and that the two fluids fill the pore space. Then, the Darcy's law combined with a statement of conservation of mass allow us to write the governing equations of the flow as
\begin{equation}
  \label{eq:p_eqn}
   \nabla \cdot \boldsymbol{v} = q,~~\text{where}~~\boldsymbol{v} = -\lambda(S) k(\boldsymbol{x}) \nabla p, 
\end{equation}
and
\begin{equation}
  \label{eq:sat_eqn}
  \frac{\partial S}{\partial t} + \nabla \cdot (f(S)\boldsymbol{v})=q_w,
\end{equation}
where $\boldsymbol{v}$ is the Darcy velocity, $S$ is the water saturation, and $k$ is the permeability coefficient. The total mobility $\lambda(S)$ and the flux function $f(S)$ are respectively given by:
\begin{equation} \label{eq:flow_params}
   \begin{aligned}
      \lambda(S) &= \frac{k_{rw}(S)}{\mu_w} + \frac{k_{ro}(S)}{\mu_o},~~f(S) &= \frac{k_{rw}(S)/\mu_w}{\lambda(S)},
    \end{aligned}
\end{equation}
where $k_{rj}$, $j=w,o$, is the relative permeability of the phase $j$. 

\subsection{Solution algorithm}
Notice that the elliptic part \eqref{eq:p_eqn} and the transport part \eqref{eq:sat_eqn} of the system are coupled through the total mobility. In order to solve this problem numerically, we use an operator splitting technique \cite{Aziz}, where saturation at the previous time step is used when solving the elliptic part of the system to obtain the velocity $\bfs{v}$. This velocity is then used in an explicit time stepping scheme for the transport equation. This velocity is held constant for a predetermined number of time steps, which yields a new saturation. This new saturation is then used to solve the elliptic problem again and the process is continued until the final time is reached. A schematic of the operator splitting is shown in Fig.~\ref{fig:opsplit}. 
\begin{figure}[htb]
\centering
\includegraphics[scale=0.6]{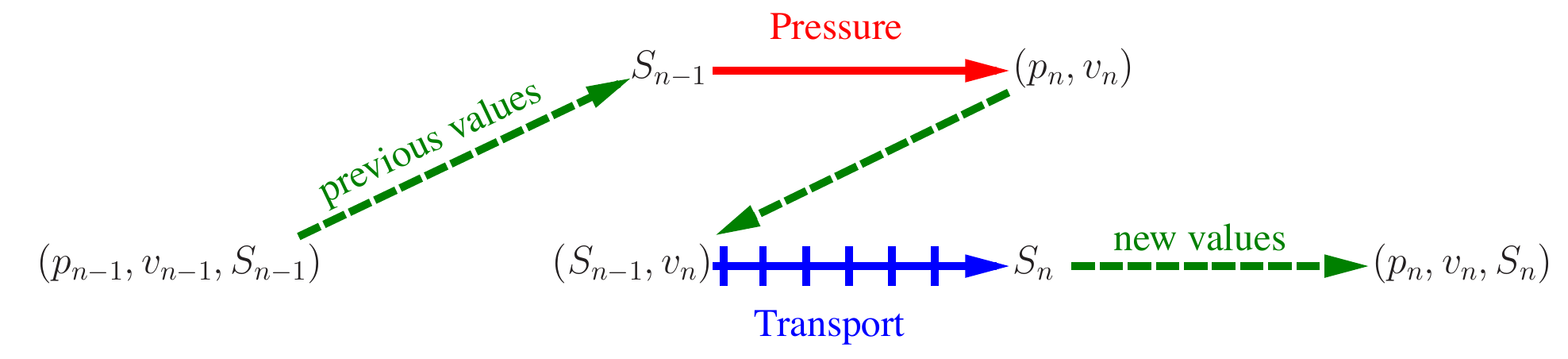} 
\caption{Operator splitting for the two-phase problem.}
\label{fig:opsplit}
\end{figure}

To discretize the transport equation, we first integrate \eqref{eq:sat_eqn} with respect to time and over some
$C_z \subset \Omega$. Here we apply the left end point quadrature rule to its second term in time and use integration by parts to arrive at the approximation
\begin{align*}
\text{meas}(C_z)(S_{z,n} - S_{z,n-1})+ \Delta t \displaystyle\int_{\partial C_z} {\bfs{v}} \cdot \bfs{n} f(S_{z,n-1}) \, \d l = \int_{C_z} q_w \, \d {\bfs x},
\end{align*}
where we have neglected the error terms and 
\begin{align*}
S_{z,n} \approx \frac{1}{\text{meas}(C_z)} \displaystyle\int_{C_z} S(x,t_n) \, \d {\bfs x}.
\end{align*}

The transport part of the system is solved with an explicit time stepping as seen above and an upwinding scheme is used on the term $\int_{\partial C_z} {\bfs{v}} \cdot \bfs{n} f(S) \, \d l$. A review of upwinding on a rectangular mesh can be found for example in \cite{Thomas}. Typically in this situation, it is imperative that numerical approximation
of  ${\bfs v}$ satisfies certain local conservation property. In particular, given ${\bfs v}_h \approx {\bfs v}$, it is
desirable to have
\begin{equation} \label{eq:conserve}
\int_{\partial C_z} {\bfs v}_h \cdot {\bfs n} \, \d l = \int_{C_z} q \, \d {\bfs x}. 
\end{equation}
A natural way of obtaining this local conservation property is to seek the solution
of $(p, {\bfs v})$ from the first order system \eqref{eq:p_eqn}. At the approximation stage, one ends up with the
mixed finite element formulation \cite{Raviart}. A more common approach is to transform \eqref{eq:p_eqn} into a second order equation that governs $p$. The approximate solution for $p$ is sought after which the approximate solution
of ${\bfs v}$ is calculated using the relation $\boldsymbol{v} = -\lambda(S) k(\boldsymbol{x}) \nabla p$,
and hence a {\em postprocessing} procedure is used. Unfortunately, a standard technique such as the Galerkin finite element method
does not allow for a straightforward postprocessing to obtain a ${\bfs v}_h$ that is locally conservative.

Furthermore, aside from the issues pertaining to local conservation, it is almost always impossible to conduct 
numerical simulations at the finer scales if one is to include the ever increasing details of geological information.
A viable alternative is the use of multiscale methods, which is the subject of the next section.

\section{Generalized multiscale finite element method}
\label{sec:gmsfem}
In this section we describe a systematic coarse grid solution technique that may be used as a reduced-order alternative to a standard fine grid approach (such as a fully resolved finite element discretization). The solution procedure is built within the framework of the Generalized Multiscale Finite Element Method (GMsFEM), in which a pressure solution is sought within a space of precomputed multiscale basis functions. This type of generalized method allows for the systematic enrichment of coarse solution spaces based on the underlying structure of the problem (e.g., the structure of the permeability coefficient $k({\bfs x})$).  

To fix our attention, we consider
\begin{equation}\label{eq:elliptic}
\begin{cases}
\begin{aligned}
-\nabla \cdot (\lambda k({\bfs x})  \nabla p)&=q \hspace*{0.5cm} \text{in} ~~ \Omega \subset \mathbb{R}^2 \\
p&= p_{\text{D}} \hspace*{0.5cm} \text{on} ~~ \Gamma_{\text{D}} \\
-\lambda k({\bfs x})  \nabla p  \cdot {\bfs n} &= g_{\text{N}} \hspace*{0.5cm} \text{on} ~~ \Gamma_{\text{N}} 
\end{aligned}
\end{cases}
\end{equation}
with Dirichlet and Neumann boundary conditions given by $p_{\text{D}}, g_{\text{N}}$, respectively, and a forcing function $q$. Within the context of the operator splitting procedure we assume that $\lambda$ is already available.
The variational formulation associated with \eqref{eq:elliptic} is to seek
$p$ with $(p-p_\text{D}) \in H^1_\text{D} =  \{v \in H^1(\Omega) : v|_{\Gamma_{\text{D}}} =0\}$ that satisfies
\begin{equation}\label{eq:wf}
a(p,w)=(q,w)+ \langle g_\text{N}, w \rangle_{\Gamma_\text{N}}~~~\forall ~ w \in H^1_\text{D},
\end{equation}  
where
\begin{equation*}
a(p,w) = \displaystyle\int_\Omega \lambda k({\bfs x})  \nabla p \cdot \nabla w \, \d {\bfs x}, \hspace*{0.2cm}
 (q,w) =  \displaystyle\int_\Omega q w \, \d {\bfs x}, \hspace*{0.2cm} \text{and} \hspace*{0.2cm}
\langle g_\text{N}, w \rangle_{\Gamma_\text{N}} = \displaystyle\int_{\Gamma_{\text{N}}} g_\text{N}w \, \d l.
\end{equation*}

Typical Galerkin finite element methods seek the approximate solution of $p$
in some finite dimensional subspace of $H^1_\text{D}$ that satisfies \eqref{eq:wf}.
This finite dimensional subspace is associated with a discretization of 
$\Omega$ into ${\mathcal  T}_h$ consisting of closed quadrilateral (or triangular) elements $\tau$ such
that  ${\overline \Omega}=\cup_{\tau \in {\mathcal T}_h} \tau$, where
$h=\max_{\tau \in {\mathcal T}_h} \{h_\tau\}$ and $h_\tau$ is the diameter of $\tau$. For example,
by defining  the conforming linear finite element space $\mathcal{V}_h$ over
${\mathcal T}_h$ as
$$ \mathcal{V}_h= \Big \{w_h\in C({{\Omega }}): w_h |_\tau
\hspace{2mm} {\rm  is \hspace{2mm}linear \hspace{2mm}
for\hspace{2mm} all } \hspace{2mm} \tau \in {\mathcal T}_h \hspace{2mm}
{\rm  and} \hspace{2mm}
 w_h |_{\Gamma_{\text{D}}}=0 \Big\},$$
the approximate solution $p_h$ is found to satisfy $(p_h-p_{\text{D},h}) \in \mathcal{V}_h$ and
\begin{equation}\label{eq:stfem}
a(p_h, w_h)=(q,w_h)+ \langle g_\text{N}, w_h \rangle_{\Gamma_\text{N}}~~~\forall ~ w_h  \in \mathcal{V}_h.
\end{equation}  
To proceed, 
we designate $Z$ as the set of all vertices in the discretization of $\Omega$ where $Z = Z_{\text{in}} \cup Z_\text{d} \cup Z_\text{n}$
with $Z_\text{d}$ is the set of vertices on $\Gamma_\text{D}$, $Z_\text{in}$ is the set of interior vertices, and
$Z_\text{n}$ is the set of vertices on $\Gamma_{\text{N}}$.
Designating $\{\phi_\zeta\}_{\zeta \in Z_\text{in} \cup Z_\text{n}}$ as the linear/bilinear basis functions of
$\mathcal{V}_h$, \eqref{eq:stfem} yields a linear system 
\begin{equation} \label{eq:fine}
\mathbf{A}  \mathbf{p} = \mathbf{f},
\end{equation}
where $\mathbf{A}$ is a square matrix with entries $a(\phi_\zeta, \phi_z)$, $\mathbf{f}$
is a vector with entries $(f,\phi_\zeta)$ for $\zeta \in Z_{\text{in}}$ and
$(f,\phi_\zeta) + \langle g_\text{N}, \phi_\zeta \rangle$ for $\zeta \in Z_\text{n}$. The vector $\mathbf{p}$
contains the nodal values of $p_h$, i.e., $p_\zeta = p_h({\bfs x}_\zeta)$, which coincide with the coordinates
in the linear combination of $\{ \phi_\zeta \}$. We note that the size
of this system is $\dim(Z_\text{in} \cup Z_\text{n})$.
\subsection{MsFEM for pressure equations}
The multiscale finite element method (MsFEM) was introduced in \cite{Hou} and
further analyzed in \cite{hwc99}. Similar to the approximation method described earlier,
MsFEM is a continuous Galerkin finite element method that is based on solving \eqref{eq:wf}
in a finite dimensional subspace of $H^1_\text{D}$. Its distinct feature is in the careful
choice of a multiscale finite dimensional subspace that allows calculation
of the approximate solution on $\mathcal{T}_h$ 
without directly resolving the fine scale heterogeneity globally. This means $h$ is much larger
than the characteristic fine scale of $k({\bfs x})$.
Since much of the fine scale heterogeneity has its source from $k({\bfs x})$, this information
should be ingrained in this subspace. This is done through the construction of
the multiscale basis functions that characterize the subspace.

As in the standard Galerkin finite element method, the multiscale basis functions
are associated with the nodes/vertices in the discretization of $\Omega$.  In the interest of offering a straightforward presentation, in what follows we assume that $\mathcal{T}_h$ is a collection of rectangles $\tau$ such as depicted in Fig.~\ref{schematic}. Extension of the method to triangles follows the same line of arguments.

For a vertex $z \in Z_{\text{in}}  \cup Z_\text{n}$, the corresponding multiscale basis functions $\chi_z$ are
defined in such a way that $\chi_{z,\tau} = \chi_z \big |_\tau$ is governed by
\begin{equation} \label{eq:msbasis}
\begin{cases}
\begin{aligned}
  -\nabla \cdot \big( k({\bfs x}) \nabla \chi_{z,\tau} \big) &= 0, ~~\text{in}~~ \tau\\
\chi_{z,\tau}({\bfs x}) &= \phi_{z,\tau}({\bfs x}), ~~\text{on}~~ \partial \tau,
\end{aligned}
\end{cases}
\end{equation}
where
$\phi_{z,\tau}({\bfs x})$ is the standard bilinear function in $\tau$ and $z$ is a vertex of $\tau$.
The multiscale finite dimensional space is defined as
\begin{equation} \label{eq:mspace}
\mathcal{V}_{\text{ms},h} = \text{span}\Big\{ ~\chi_z: z \in Z_{\text{in}}  \cup Z_\text{n} \Big\} \subset H^1_\text{D}.
\end{equation}
The MsFEM solution is $p_{\text{ms},h}$ with $(p_{\text{ms},h} - p_{\text{D},h}) \in \mathcal{V}_{\text{ms},h}$ satisfying
\begin{equation}\label{eq:msfem}
a(p_{\text{ms},h},w_h) = (q,w_h)  + \langle g_\text{N}, w_h \rangle_{\Gamma_\text{N}}~~~\forall ~ w_h  \in \mathcal{V}_{\text{ms},h}.
\end{equation}
Similar to the standard Galerkin finite element method, the linear system resulting from \eqref{eq:msfem}
is
\begin{equation} \label{eq:lsmsfem}
\mathbf{A}_\text{ms}  \mathbf{p} = \mathbf{f}_\text{ms},
\end{equation}
where $\mathbf{A}_\text{ms}$ is a square matrix with entries $a(\chi_\zeta, \chi_z)$, $\mathbf{f}_\text{ms}$
is a vector with entries $(f,\chi_\zeta)$ for $\zeta \in Z_{\text{in}}$ and
$(f,\chi_\zeta) + \langle g_\text{N}, \chi_\zeta \rangle$ for $\zeta \in Z_\text{n}$, and $\mathbf{p}$ is as
in \eqref{eq:fine}.

To this end, we emphasize on the significant advantage of MsFEM. If one is to solve the pressure equation
\eqref{eq:elliptic} to the extent that the fine scale heterogeneity of $k({\bfs x})$ is directly resolved in
the finite element formulation \eqref{eq:stfem}, then ideally the level of mesh resolution on which the flow and
transport are simulated has to be in a comparable scale with the level of subsurface resolution 
exhibited by $k({\bfs x})$. In turn, the resulting linear system such as expressed in \eqref{eq:fine} can have a very 
large dimension whose inversion poses a very challenging task. Employing MsFEM on the other hand, avoids
this drawback as there is no need to pose \eqref{eq:msfem} on a mesh having comparable size to the characteristic
scale of $k({\bfs x})$. This is because the multiscale finite element space in \eqref{eq:mspace} already contains
this information.

Still, potentially there can be a significant error associated with MsFEM approximation that stems
from the imposition of linear boundary condition on $\chi_z$ on $\partial \tau$, which causes a mismatch
as compared to the true solution. This mismatch is even more pronounced when $k({\bfs x})$ exhibits
channelized features and/or scattered inclusions in which the the values of $k({\bfs x})$ are orders of magnitude higher
(or lower) compared to the neighboring regions. Within the context of modeling and simulation of multiphase flow and transport, there have been several research directions aiming toward
alleviating this situation. For example, oversampling techniques \cite{cegh08}, adoption of global information
\cite{eghe06}, and the use of a local-global iterative approach \cite{deg07} have been used for error reduction. More recent investigations involve the alternative of systematically enriching the original coarse space $\mathcal{V}_{\text{ms},h}$. This is the subject of next subsection.

%
\subsection{GMsFEM for pressure equations}
The Generalized Multiscale Finite Element Method (GMsFEM) is based on a systematic enrichment of
$\mathcal{V}_{\text{ms},h}$ \cite{egw11,ge10,ge10part2}. This enrichment is made available by taking
advantage of the knowledge of the spectral properties of the original differential operator governing
the multiscale basis functions $\chi_z$; namely, the left hand side of \eqref{eq:msbasis}.
These types of enriched spaces yield pressure solutions whose errors decrease with respect to the localized eigenvalue behavior. In related work, it is shown that the errors typically depend on the first eigenvalue that is not included in the space construction. We refer the interested reader to \cite{egw11} for rigorous error estimates.

\begin{figure}[htb]
  \begin{center}
      \hspace*{2.1cm} \includegraphics[width=0.8\textwidth]{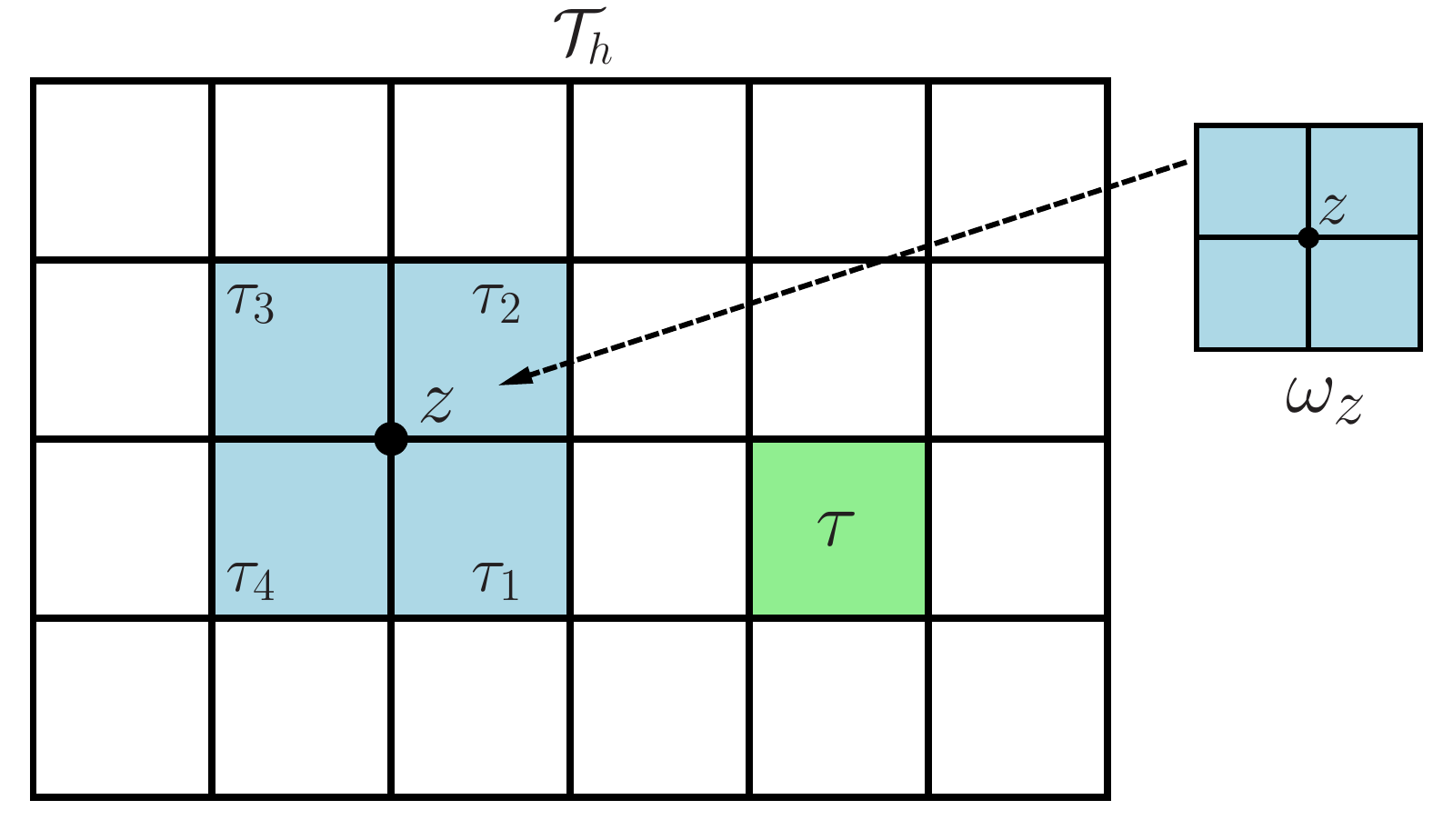}
  \end{center}
  \caption{Discretization of $\Omega$ into $\mathcal{T}_h=\cup \tau$. Here
  $\omega_z = \cup_{j=1}^4 \tau_j$ is the $\text{supp}(\chi_z)$}
  \label{schematic}
\end{figure}

To equip the description below, for a vertex $z \in Z_\text{in} \cup Z_\text{n}$, we let $\omega_z$ be the support of the multiscale basis function
$\chi_z$, namely, $\omega_z = \cup_{j=1}^{4} \tau_j$, where $\tau_j$ are finite elements having
$z$ as one of its vertices. 
Enrichment of $\mathcal{V}_{\text{ms},h}$ employs the pointwise energy of the original multiscale basis functions
\begin{equation}
\widetilde{k} = k \sum_{z \in Z_{\text{in}} \cup Z_\text{n} } h^2 | \nabla \chi_z |^2.
\end{equation}
In particular, using $\widetilde{k}$ as data, we solve an eigenvalue problem
\begin{equation} \label{eigproblem}
\begin{cases}
\begin{aligned}
-\nabla \cdot (k({\bfs x}) \nabla \psi_z) &= \mu_z  \widetilde{k} \psi_z, ~~\text{in}~~ \omega_z\\
-\nabla \psi_z \cdot {\bfs n} &= 0, ~~\text{on}~~\partial \omega_z.
\end{aligned}
\end{cases}
\end{equation}
for each $\omega_z$. We denote the eigenvalues and eigenvectors of \eqref{eigproblem} by $\{ \mu_{z,\ell}  \}$ and
$\{ \psi_{z,\ell} \}$, respectively. By direct observation of \eqref{eigproblem} we see that the first eigenpair is $\mu_{z,1} = 0$ and $\psi_{z,1} = 1$. We order the resulting eigenvalues as
\begin{equation}
\mu_{z,1} \leq \mu_{z,2} \leq \ldots \leq \mu_{z,n} \leq \ldots
\end{equation}
The size of the eigenvalues is closely related to the structure of $\widetilde{k}$ and,
in particular, $m$ inclusions and channels in $\widetilde{k}$ yields $m$ asymptotically vanishing eigenvalues. We use the eigenvectors corresponding to small, asymptotically vanishing eigenvalues for the construction of the
enriched space. In particular, we define the basis functions
\begin{equation} \label{enrichbasis}
\Phi_{z,\ell} = \chi_z \psi_{z,\ell} \quad \text{for} \, \, z \in  Z_{\text{in}} \cup Z_\text{n}~~\text{and} ~~1 \leq \ell \leq L_z,
\end{equation}
\noindent
where $L_z$ denotes the number of eigenvectors that will be chosen for each vertex $z$. We note that
this setting yields $\text{supp}(\Phi_{z,\ell}) = \omega_z$. With the updated multiscale basis functions available, we define the enriched multiscale finite element space as

\begin{equation}
\mathcal{V}_{\text{ems},h} = \text{span}\Big \{ \Phi_{z,\ell}:~~z \in  Z_{\text{in}} \cup Z_\text{n} ~~\text{and} ~~1 \leq 
\ell \leq L_z \Big \} \subset H^1_\text{D}.
\end{equation}
The GMsFEM solution is $p_{\text{ems},h}$ with $(p_{\text{ems},h} - p_{\text{D},h}) \in \mathcal{V}_{\text{ems},h}$ satisfying
\begin{equation}\label{eq:gmsfem}
a(p_{\text{ems},h},w_h) = (q,w_h)  + \langle g_\text{N}, w_h \rangle_{\Gamma_\text{N}}~~~\forall ~ w_h  \in \mathcal{V}_{\text{ems},h}.
\end{equation}

Upon comparison, it is obvious that $\mathcal{V}_{\text{ms},h} \subset  \mathcal{V}_{\text{ems},h}$
and thus $\dim(\mathcal{V}_{\text{ems},h}) > \dim(\mathcal{V}_{\text{ms},h})$. Consequently,
the resulting linear system $\mathbf{A}_\text{ems}  \mathbf{p} = \mathbf{f}_\text{ems}$ has a larger dimension than its counterpart in \eqref{eq:lsmsfem}, where here we view $\mathbf{p}$ as a vector containing
the coordinates (rather than the pressure nodal values) in the linear combination of $\Phi_{z,l}$ that span the
approximate pressure. 
However, this system is still significantly smaller compared
to the fine scale analogue in \eqref{eq:fine} that is solved on mesh that has size of the order of the characteristic scale of
$k({\bfs x})$. Thus, the enriched coarse system offers a suitable reduced-order alternative for obtaining approximate pressure solutions while maintaining an acceptable level of accuracy.

We reiterate that the construction of the enriched basis functions in Eq.~\eqref{enrichbasis} is performed on a respective $\omega_z$ (refer back to Fig.~\ref{schematic}). However, the postprocessing technique offered in the next section is localized to $\tau$. As such, it is important to note that the enriched basis functions need to be
restricted into respective element contributions to implement the proposed procedure.
So while we refer to the enriched basis functions synonymously (whether they are posed on a $\omega_z$ or
$\tau$), we make this distinction for additional clarity.   

\section{Locally conservative flux by postprocessing of the GMsFEM solution}
\label{sec:postprocess}
A pivotal contribution of this section is the introduction of a procedure in which enriched pressure solutions may be postprocessed to ensure that the conservation property in \eqref{eq:conserve} is met. In doing so, we may use the conservative flux quantities that are required to solve the two-phase model described in Sect.~\ref{model}. Additionally, we remark that the conservative flux quantities and associated saturation profiles are shown to inherit the increased level of numerical accuracy that is associated with the enriched coarse space construction.

The postprocessing technique that we propose in order to construct $\widetilde{\bfs{v}}_h \cdot {\bfs n}$ satisfying  \eqref{eq:conserve} is achieved by relegating the evaluation to
independent element-by-element calculations. 
 At this stage, we need to make precise
about what $C_z$ should look like. A suitable choice within the configuration of the nodally based Galerkin
finite element method is to set $C_z$ as the control volume associated
with vertex $z$ (see left plot of Fig.~\ref{fig:taucv}). In this respect, we employ a basic fact about GMsFEM that
we obtain from \eqref{eq:gmsfem}:
\begin{equation} \label{eq:avflux}
\begin{aligned}
a(p_{\text{ems},h},\Phi_{z,\ell}) &= (q,\Phi_{z,\ell}), ~~
\text{for}~~ z \in Z_\text{in}, ~~\ell=1,\cdots,L_z\\
a(p_{\text{ems},h},\Phi_{z,\ell}) &= (q,\Phi_{z,\ell})  + \langle g_\text{N}, \Phi_{z,\ell} \rangle_{\Gamma_\text{N}}, ~~
\text{for}~~ z \in Z_\text{n}, ~~\ell=1,\cdots,L_z.
\end{aligned}
\end{equation}
Since $\text{supp}(\Phi_{z,\ell}) = \omega_z = \cup_{j=1}^N \tau_j$,
\begin{equation}\label{eq:varivertex}
\begin{aligned}
a(p_{\text{ems},h},\Phi_{z,\ell}) &=\int_{\omega_z} \lambda k({\bfs x}) \nabla p_{\text{ems},h} \cdot \nabla
\Phi_{z,\ell} \, \d {\bfs x} =
\sum_{j=1}^{4} Q_{z,\ell,j},\\
(q,\Phi_{z,\ell}) &= \int_{\omega_z} q({\bfs x}) \Phi_{z,\ell} \, \d {\bfs x} = \sum_{j=1}^{4} F_{z,\ell,j},\\
\langle g_\text{N}, \Phi_{z,\ell} \rangle_{\Gamma_\text{N}} &= \int_{\Gamma_\text{N} \cap \partial \omega_z}
g_\text{N} \Phi_{z,\ell} \, \d l = \sum_{j=1}^{4} G_{z,\ell,j},
\end{aligned}
\end{equation}
with
\begin{equation}\label{eq:QFG}
\begin{aligned}
Q_{z,\ell,j} &= \int_{\tau_j}  \lambda k({\bfs x}) \nabla p_{\text{ems},h} \cdot \nabla \Phi_{z,\ell} \, \d {\bfs x},\\
F_{z,\ell,j} &= \int_{\tau_j} q({\bfs x}) \Phi_{z,\ell} \, \d {\bfs x}, ~~~\text{and}~~~
G_{z,\ell,j} = \int_{\Gamma_{\text{N}} \cap \partial \tau_j } g_\text{N} \Phi_{z,\ell} \, \d l.
\end{aligned}
\end{equation}
We conclude for each $\ell =1,\cdots, L_z$ that
\begin{equation} \label{eq:QF}
\sum_{j=1}^{4} (Q_{z,\ell,j} - F_{z,\ell,j})=0, \hspace*{0.2cm} \text{if} ~~ z \in Z_\text{in},~~~
\sum_{j=1}^{4} (Q_{z,\ell,j} - F_{z,\ell,j} - G_{z,\ell,j})=0, \hspace*{0.2cm} \text{if} ~~ z \in Z_\text{n}.
\end{equation}
\begin{figure}[htb]
\centering
\includegraphics[scale=0.7]{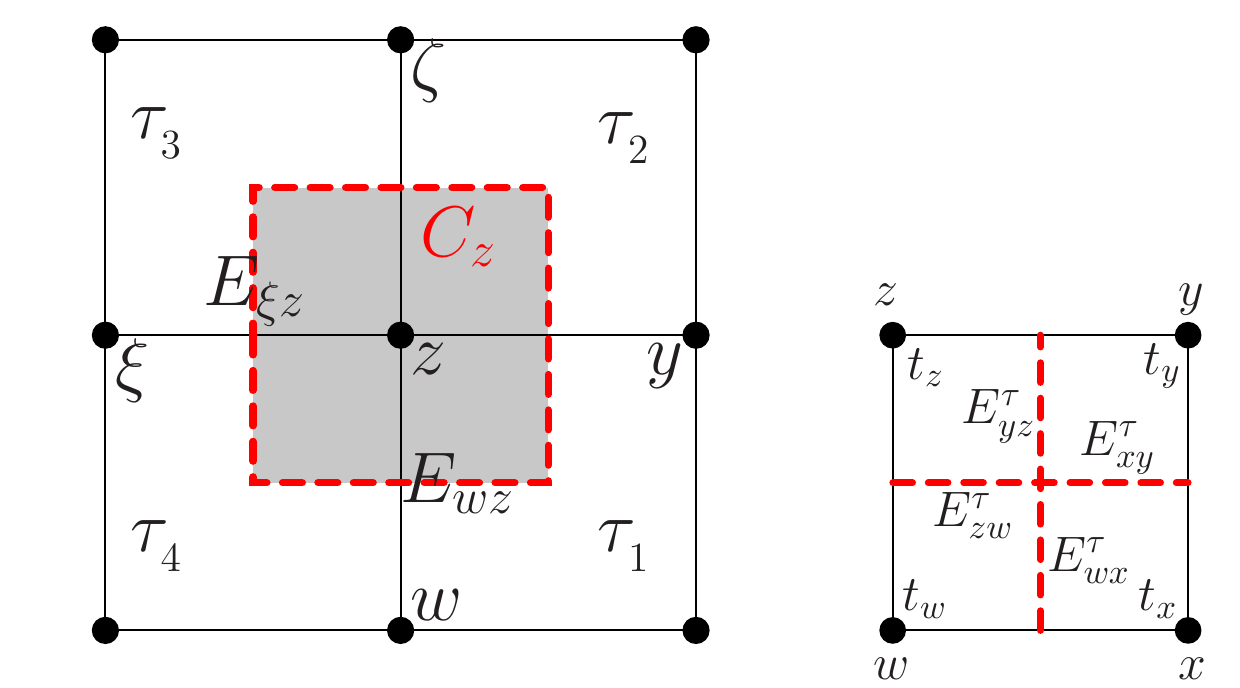}
\caption{Left: $C_z$ is the control volume associated with vertex $z$, where
$\partial C_z =E_{\xi z} \cup E_{zy} \cup E_{wz} \cup E_{z \zeta}$.
Right: A finite element $\tau$
is divided into four quadrilaterals $t_w$, $t_x$, $t_y$, and $t_z$.} \label{fig:taucv}
\end{figure}

\subsection{Auxiliary boundary value problem in $\tau$}
Since we desire independent element-by-element calculations, we proceed with a formulation of
an auxiliary boundary value problem in $\tau$ (see the right plots of
Fig.~\ref{fig:taucv}) having vertices $v(\tau) = \{w,x,y,z\}$.
The boundary value problem governs $\widetilde{p}_\tau$ with
\begin{equation} \label{eq:taubvp}
\begin{cases}
\begin{aligned}
-\nabla \cdot (\lambda k({\bfs x}) \nabla \widetilde{p}_\tau) &= q  \hspace*{0.5cm} \text{in} ~~ \tau\\
-\lambda k({\bfs x}) \nabla \widetilde{p}_\tau \cdot {\bfs n} &= \widetilde{g}_\tau  \hspace*{0.35cm} \text{on} ~~ \partial \tau.
\end{aligned}
\end{cases} 
\end{equation}
Here we designate $\partial \tau = \cup_{\zeta \in v(\tau)} E^\tau_\zeta$, where
$E^\tau_\zeta = \partial \tau \cap \partial t_\zeta$ (i.e. half of each element edge containing the vertex $\zeta$). Furthermore, we set
$\widetilde{g}_\tau$ as piecewise function on $\partial \tau$ such that
\begin{equation}\label{eq:tauedgeflux}
\int_{E^\tau_\zeta} \widetilde{g}_\tau \, \d l = F_{\zeta,1} - Q_{\zeta,1}, \hspace*{0.5cm} \text{for} ~~\zeta \in v(\tau),
\end{equation}
where 
\begin{equation}\label{eq:QFloc}
Q_{\zeta,1} = \displaystyle\int_{\tau} \lambda k \nabla p_{\text{ems},h} \cdot \nabla \Phi_{\zeta,1} \, \d {\bfs x}
 \hspace*{0.3cm} \text{and} \hspace*{0.3cm} F_{\zeta,1} = \int_{\tau} q \Phi_{\zeta,1} \, \d {\bfs x}.
\end{equation}
The existence and uniqueness of the above problem is stated below

\begin{lemma} \label{lem:bvpsol}
The compatibility condition holds for \eqref{eq:taubvp} and thus $\widetilde{p}$ is unique up to a constant. 
\end{lemma}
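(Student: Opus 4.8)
The plan is to check the solvability (compatibility) condition for the pure-Neumann problem \eqref{eq:taubvp} by a direct integration, and then to obtain existence together with uniqueness-up-to-a-constant from the standard Lax--Milgram argument on a quotient space.

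First I would integrate the equation in \eqref{eq:taubvp} over $\tau$, apply the divergence theorem, and substitute the Neumann data; this shows that \eqref{eq:taubvp} admits a solution if and only if
\[
\int_{\partial \tau} \widetilde{g}_\tau \, \d l \;=\; \int_\tau q \, \d {\bfs x}.
\]
Since $\partial\tau = \cup_{\zeta \in v(\tau)} E^\tau_\zeta$ is a union of the four half-edges overlapping only at midpoints (a set of measure zero), the prescription \eqref{eq:tauedgeflux} gives $\int_{\partial\tau} \widetilde{g}_\tau \, \d l = \sum_{\zeta \in v(\tau)} (F_{\zeta,1} - Q_{\zeta,1})$ with $F_{\zeta,1}, Q_{\zeta,1}$ as in \eqref{eq:QFloc}. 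So the compatibility identity is equivalent to the two claims $\sum_{\zeta \in v(\tau)} F_{\zeta,1} = \int_\tau q \, \d {\bfs x}$ and $\sum_{\zeta \in v(\tau)} Q_{\zeta,1} = 0$.

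The crux is that the level-one enriched basis functions restricted to $\tau$ form a partition of unity. Because the first eigenpair of \eqref{eigproblem} is $\mu_{z,1}=0$, $\psi_{z,1}\equiv 1$, definition \eqref{enrichbasis} gives $\Phi_{\zeta,1}\big|_\tau = \chi_{\zeta,\tau}$. Each $\chi_{\zeta,\tau}$ solves \eqref{eq:msbasis} in $\tau$ with bilinear boundary data $\phi_{\zeta,\tau}$; since $\sum_{\zeta \in v(\tau)} \phi_{\zeta,\tau} \equiv 1$ on $\partial\tau$ and the constant function $1$ satisfies $-\nabla\cdot(k\nabla 1)=0$, uniqueness for \eqref{eq:msbasis} forces $\sum_{\zeta \in v(\tau)} \chi_{\zeta,\tau} \equiv 1$ on $\tau$, so that $\nabla\big(\sum_{\zeta \in v(\tau)} \Phi_{\zeta,1}\big)=0$ on $\tau$. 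Summing the definitions in \eqref{eq:QFloc} and moving the finite sum inside the integrals then yields $\sum_{\zeta} F_{\zeta,1} = \int_\tau q\cdot 1 \, \d{\bfs x} = \int_\tau q \, \d{\bfs x}$ and $\sum_{\zeta} Q_{\zeta,1} = \int_\tau \lambda k\,\nabla p_{\text{ems},h}\cdot\nabla\big(\sum_\zeta \Phi_{\zeta,1}\big)\, \d{\bfs x} = 0$, which is precisely the required compatibility condition.

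With compatibility established, I would conclude by the usual Neumann-problem argument: work in $H^1(\tau)/\mathbb{R}$ (equivalently, on the zero-mean subspace of $H^1(\tau)$), on which the bilinear form $(u,w)\mapsto \int_\tau \lambda k\,\nabla u\cdot\nabla w\, \d{\bfs x}$ is bounded and---using the Poincar\'e--Wirtinger inequality on the rectangle $\tau$ together with uniform upper and lower bounds on $\lambda k$---coercive, while the right-hand side functional $w\mapsto \int_\tau q w \, \d{\bfs x} - \int_{\partial\tau} \widetilde{g}_\tau w \, \d l$ is well defined on the quotient exactly because the compatibility identity holds. Lax--Milgram then delivers a solution $\widetilde{p}_\tau$, unique modulo additive constants. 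The only step that needs genuine care is the partition-of-unity argument---in particular pinning down the uniqueness claim for \eqref{eq:msbasis} and keeping the half-edge bookkeeping in \eqref{eq:tauedgeflux} consistent with the vertex quantities in \eqref{eq:QFloc}; the remaining estimates are entirely routine.
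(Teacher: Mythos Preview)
Your proposal is correct and follows essentially the same route as the paper: verify $\int_{\partial\tau}\widetilde{g}_\tau\,\d l=\int_\tau q\,\d{\bfs x}$ by using that the $\Phi_{\zeta,1}\big|_\tau=\chi_{\zeta,\tau}$ form a partition of unity on $\tau$, so that $\sum_\zeta F_{\zeta,1}=\int_\tau q\,\d{\bfs x}$ and $\sum_\zeta Q_{\zeta,1}=0$. The paper simply cites a reference for the ensuing existence/uniqueness-up-to-a-constant, whereas you spell out the Lax--Milgram argument on $H^1(\tau)/\mathbb{R}$ and justify the partition-of-unity identity via uniqueness for \eqref{eq:msbasis}; these are elaborations, not a different strategy.
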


\begin{proof}
For the compatibility condition \cite{evans}, it suffices to show that
$$
\int_{\partial \tau} \widetilde{g}_\tau \, \d l = \int_\tau q \, \d {\bfs x}.
$$
Because
$\displaystyle \sum_{\zeta \in v(\tau)} \Phi_{\zeta,1} \Big |_\tau = \sum_{\zeta \in v(\tau)} \chi_{\zeta,\tau} =1$,
we get
\begin{align*}
\sum_{\zeta \in v(\tau)} F_{\zeta,1} &= \int_\tau q \sum_{\zeta \in v(\tau)}  \Phi_{\zeta,1} \, \d {\bfs x} =
\int_\tau q \, \d {\bfs x} \\
\sum_{\zeta \in v(\tau)} Q_{\zeta,1} &= \int_\tau \lambda k \nabla p_{\text{ems},h} \cdot 
\nabla \Big(\sum_{\zeta \in v(\tau)}  \Phi_{\zeta,1}\Big) \, \d {\bfs x} = 0.
\end{align*}
Taking into consideration the choice of $\widetilde{g}_\tau$ above, these then give
\begin{equation}\label{eq:compat}
\int_{\partial \tau} \widetilde{g}_\tau \, \d l =  \sum_{\zeta \in v(\tau)} \int_{E^\tau_\zeta} \widetilde{g}_\tau \, \d l
= \sum_{\zeta \in v(\tau)} (F_{\zeta,1} - Q_{\zeta,1})
= \int_\tau q \, \d {\bfs x},
\end{equation}
which is the desired result.
\end{proof}

We note that \eqref{eq:compat} implies
\begin{equation}\label{eq:closeness}
-\displaystyle\int_{\partial \tau} \lambda k \nabla \widetilde{p}_\tau \cdot \bfs{n} \, \d l = -\displaystyle\int_{\partial \tau} \lambda k \nabla p \cdot \bfs{n} \, \d l,
\end{equation}
which shows that the solution of \eqref{eq:taubvp} recovers the flux of $p$ averaged over $\partial \tau$, i.e., a local conservation property.  We use \eqref{eq:taubvp} as a governing principle to derive the postprocessing
technique for calculating a locally conservative flux from $p_{\text{ems},h}$.

\subsection{Elemental calculation}
This elemental calculation is based on discretization of $\tau$ into
quadrilaterals $t_\zeta$, i.e., $\tau = \cup_{\zeta \in v(\tau)} t_\zeta$, each of which yields
$t_\zeta = C_\zeta \cap \tau$, see the right plots of Fig.~\ref{fig:taucv}. 
We set the local solution space as $\mathcal{V}(\tau) = \text{span}\{\Phi_{\zeta,1}\}_{\zeta \in v(\tau)}$. The numerical solution associated with \eqref{eq:taubvp} is to find $\widetilde{p}_{\tau,h } \in \mathcal{V}(\tau)$ satisfying
\begin{equation}\label{eq:fvutildeh}
\displaystyle -\int_{\partial t_\zeta} \lambda k \nabla \widetilde{p}_{\tau,h} \cdot \bfs{n} \, \d l =
\int_{t_\zeta} q \, \d {\bfs x}, \hspace*{0.5cm} \forall \zeta \in v(\tau).
\end{equation}
The following four equations result from \eqref{eq:fvutildeh}:
\begin{equation}\label{eq:loccon}
\begin{aligned}
\phantom{-}q_{wx}^\tau - q_{wz}^\tau &= Q_{w,1} - F_{w,1} + \int_{t_w} q \, \d {\bfs x}, \\
-q_{wx}^\tau + q_{xy}^\tau &= Q_{x,1} - F_{x,1} + \int_{t_x} q \, \d {\bfs x},  \\
-q_{zy}^\tau - q_{xy}^\tau &= Q_{y,1} - F_{y,1} +  \int_{t_y} q \, \d {\bfs x},\\
\phantom{-}q_{zy}^\tau + q_{wz}^\tau &= Q_{z,1} - F_{z,1} + \int_{t_z} q \, \d {\bfs x},
\end{aligned}
\end{equation}
where
\begin{equation*} 
\begin{aligned}
q_{wx}^\tau &= -\int_{E^\tau_{wx}} \lambda k \nabla \widetilde{p}_{\tau,h} \cdot \bfs{n} \, \d l, \hspace*{0.5cm}
q_{zy}^\tau = -\int_{E^\tau_{zy}} \lambda k \nabla \widetilde{p}_{\tau,h} \cdot \bfs{n} \, \d l, \\
q_{wz}^\tau &= -\int_{E^\tau_{wz}} \lambda k \nabla \widetilde{p}_{\tau,h} \cdot \bfs{n} \, \d l, \hspace*{0.5cm}
q_{xy}^\tau = -\int_{E^\tau_{xy}} \lambda k \nabla \widetilde{p}_{\tau,h} \cdot \bfs{n} \, \d l,
\end{aligned}
\end{equation*}
and $E^\tau_{\zeta \eta} = \partial t_\zeta \cap \partial t_\eta$, for $\zeta, \eta = w, x, y, z$, and $\zeta \ne \eta$.
We note that similar equations will be created if $\tau$ is triangle (see \cite{bg13}).

Because $\widetilde{p}_{\tau,h} = \sum_{\zeta \in v(\tau)} \alpha_\zeta \Phi_{\zeta,1}$ with unknown
$\alpha_\zeta$, the above equation yields a linear system of the form $\widetilde{\mathbf{A}}\bfs{\widetilde{\alpha}} = \widetilde{\mathbf{f}}$ where 

\begin{equation}\label{eq:localsys}
\widetilde{\mathbf{A}}_{\zeta \eta} = -\displaystyle\int_{E_{\zeta \eta}} \lambda k\nabla \Phi_{\eta,1} \cdot \bfs{n} \, \d l \hspace*{0.3cm} \text{and} \hspace*{0.3cm} \widetilde{\mathbf{f}}_\zeta = \displaystyle\int_{t_\zeta} q \, \d {\bfs x} - \displaystyle\int_{E_\zeta^\tau} \widetilde{g}_\tau \, \d l.
\end{equation}
Here the system is of dimension 4.
We note that in the case of $\tau$ adjacent to $\Gamma_\text{N}$, a similar linear system can be derived that takes into account the effect of $g_\text{N}$.

Since this linear system is obtained from numerical approximation of \eqref{eq:taubvp}, which is
a Neumann problem, the matrix $\widetilde{\mathbf{A}}$ is singular. On the other hand, because the system has a small dimension, we may specify the value at one of the vertices to remove the null space of 
$\widetilde{\mathbf{A}}$, for instance by adding a constant to one of the entries of  $\widetilde{\mathbf{A}}$,
and then invert the modified matrix. The fact that $\bfs{\widetilde{\alpha}}$ is not unique is irrelevant because
the desired final result from the system is the flux as governed by $q^\tau_{\zeta \eta}$ which is unique.
\subsection{Upscaled local conservation}
The next lemma verifies that the aggregation of elemental calculations satisfies \eqref{eq:conserve}
for $z \in Z_{\text{in}}$. We note that the same is true for $z \in Z_\text{n}$ whose proof we omit for simplicity.

\begin{lemma}\label{lem:conservation}
Fix a $z \in Z_\text{in}$ with $\omega_z = \cup_{j=1}^4 \tau_j$ and control volume $C_z$ (see Fig.~\ref{fig:taucv}).
Set
$$
\begin{aligned}
\int_{E_{\xi z}} \widetilde{\bfs v}_h \cdot {\bfs n} \, {\rm d} l &= q^{\tau_3}_{\xi z} + q^{\tau_4}_{\xi z},~~~
\int_{E_{z y}} \widetilde{\bfs v}_h \cdot {\bfs n} \, {\rm d} l = q^{\tau_1}_{z y} + q^{\tau_2}_{z y},\\
\int_{E_{w z}} \widetilde{\bfs v}_h \cdot {\bfs n} \, {\rm d} l &= q^{\tau_1}_{w z} + q^{\tau_4}_{w z}, ~~~
\int_{E_{z \zeta}} \widetilde{\bfs v}_h \cdot {\bfs n} \, {\rm d} l = q^{\tau_2}_{z \zeta} + q^{\tau_3}_{z \zeta}.
\end{aligned}
$$
Then
$$
\int_{\partial C_z} \widetilde{\bfs v}_h \cdot {\bfs n} \, {\rm d} l = \int_{C_z} q \, {\rm d} {\bfs x}.
$$
\end{lemma}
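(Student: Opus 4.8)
The plan is to sum the four elemental equations in \eqref{eq:loccon} corresponding to the four elements $\tau_1,\tau_2,\tau_3,\tau_4$ that make up $\omega_z$, each evaluated at the common vertex $z$. For a fixed $\tau_j$, vertex $z$ is one of its four corners, so $z$ plays the role of exactly one of $w,x,y,z$ in the local labeling of that element; the corresponding row of \eqref{eq:loccon} reads, after moving the volume term, something like $(\text{sum of two } q^{\tau_j} \text{ fluxes across the two half-edges of } \tau_j \text{ meeting at } z) = Q_{z,1}^{\tau_j} - F_{z,1}^{\tau_j} + \int_{t_z^{\tau_j}} q\,\d{\bfs x}$, where $t_z^{\tau_j} = C_z \cap \tau_j$. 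The key observation is that the left-hand side of that row, summed over $j=1,\dots,4$, is precisely $\int_{\partial C_z}\widetilde{\bfs v}_h\cdot{\bfs n}\,\d l$ by the very definition of $\int_{E_{\cdot}}\widetilde{\bfs v}_h\cdot{\bfs n}\,\d l$ given in the statement — each of the four edges $E_{\xi z},E_{zy},E_{wz},E_{z\zeta}$ of $\partial C_z$ is shared by exactly two of the $\tau_j$, and the two $q^{\tau_j}$ contributions to that edge are exactly what was declared equal to the edge integral.

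Next I would collect the right-hand side. Summing over $j$ gives $\sum_{j=1}^4 Q_{z,1}^{\tau_j} - \sum_{j=1}^4 F_{z,1}^{\tau_j} + \sum_{j=1}^4 \int_{t_z^{\tau_j}} q\,\d{\bfs x}$. The last sum telescopes to $\int_{C_z} q\,\d{\bfs x}$ since $C_z = \cup_{j=1}^4 t_z^{\tau_j}$ (the control volume around $z$ is exactly the union of the four quarter-elements adjacent to $z$). For the first two sums, I recognize $\sum_{j=1}^4 Q_{z,1}^{\tau_j} = Q_{z,1,\text{global}} = \int_{\omega_z}\lambda k\nabla p_{\text{ems},h}\cdot\nabla\Phi_{z,1}\,\d{\bfs x}$ and $\sum_{j=1}^4 F_{z,1}^{\tau_j} = F_{z,1,\text{global}} = \int_{\omega_z} q\Phi_{z,1}\,\d{\bfs x}$, using the decomposition of $\omega_z$ into its four elements exactly as in \eqref{eq:varivertex}--\eqref{eq:QFG}. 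Then by \eqref{eq:QF} (the $\ell=1$ case, $z\in Z_\text{in}$), we have $Q_{z,1,\text{global}} - F_{z,1,\text{global}} = 0$. Hence the right-hand side collapses to $\int_{C_z} q\,\d{\bfs x}$, which matches the left-hand side, giving the claim.

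One bookkeeping subtlety deserves care: the sign/orientation conventions in \eqref{eq:loccon} must be checked so that when the two half-edge fluxes $q^{\tau_j}_{\zeta\eta}$ meeting at a fixed $\tau_j$-corner are added across all four elements, the internal edges of $\tau_j$ interior to $\omega_z$ cancel in pairs and only the outer boundary $\partial C_z$ survives — but this is forced by the definitions in the lemma statement, which already pick out exactly the outward-normal fluxes across $E_{\xi z},E_{zy},E_{wz},E_{z\zeta}$. I would also note that the notation $Q_{\zeta,1}$, $F_{\zeta,1}$ in \eqref{eq:loccon} is the element-restricted quantity (denoted $Q_{z,\ell,j}$, $F_{z,\ell,j}$ in \eqref{eq:QFG}), so the identification of the sum over $j$ with the $\omega_z$-integral is immediate from \eqref{eq:varivertex}. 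The main obstacle, such as it is, is purely the careful matching of local vertex labels $\{w,x,y,z\}$ on each $\tau_j$ to the global picture and confirming the half-edge fluxes align with signs as asserted; once the diagram in Fig.~\ref{fig:taucv} is read correctly, the algebra is a one-line telescoping plus an application of \eqref{eq:QF}.
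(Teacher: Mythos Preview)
Your proposal is correct and follows essentially the same route as the paper: sum the vertex-$z$ row of \eqref{eq:loccon} over the four elements $\tau_j$, regroup the eight half-edge fluxes into the four edge integrals defining $\int_{\partial C_z}\widetilde{\bfs v}_h\cdot{\bfs n}\,\d l$, and on the right invoke \eqref{eq:QF} (for $\ell=1$) together with $C_z=\cup_{j=1}^4 t_{z,j}$. The only cosmetic remark is that there is no ``cancellation of interior edges'' here---each of the eight $q^{\tau_j}$ terms is already a piece of $\partial C_z$---but you immediately recognize this in noting that the lemma's definitions pick out exactly those outward fluxes, so the argument is sound.
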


\begin{proof}
The basis of the proof is using the last equation in  \eqref{eq:loccon} applied to $\tau_j$, for $j=1,\cdots,4$.
We perform a direct calculation and rearrange the terms involved to get
\begin{equation}
\begin{aligned}
\int_{\partial C_z} \widetilde{\bfs v}_h \cdot {\bfs n} \, {\rm d} l 
&= \int_{E_{\xi z}} \widetilde{\bfs v}_h \cdot {\bfs n} \, {\rm d} l +
\int_{E_{z y}} \widetilde{\bfs v}_h \cdot {\bfs n} \, {\rm d} l +
\int_{E_{w z}} \widetilde{\bfs v}_h \cdot {\bfs n} \, {\rm d} l +
\int_{E_{z \zeta}} \widetilde{\bfs v}_h \cdot {\bfs n} \, {\rm d} l\\
&= \big(q^{\tau_3}_{\xi z} + q^{\tau_4}_{\xi z} \big) + \big(q^{\tau_1}_{z y} + q^{\tau_2}_{z y}\big)
+ \big(q^{\tau_1}_{w z} + q^{\tau_4}_{w z} \big) + \big( q^{\tau_2}_{z \zeta} + q^{\tau_3}_{z \zeta} \big)\\
&= \big(q^{\tau_1}_{z y}  + q^{\tau_1}_{w z} \big) + \big( q^{\tau_2}_{z y} + q^{\tau_2}_{z \zeta}  \big)
+ \big( q^{\tau_3}_{\xi z}  +q^{\tau_3}_{z \zeta} \big) + \big( q^{\tau_4}_{\xi z}  + q^{\tau_4}_{w z} \big) \\ 
&= \sum_{j=1}^4 \Big( Q_{z,1,j} - F_{z,1,j} \Big) + \sum_{j=1}^4  \int_{t_{z,j}} q \, \d {\bfs x},
\end{aligned}
\end{equation}
where we have appropriately translated the local indexing in \eqref{eq:loccon} for $Q_{\zeta,1}$ and $F_{\zeta,1}$ into $Q_{z,1,j}$ and $F_{z,1,j}$, respectively.
Notice that $\sum_{j=1}^4 (Q_{z,1,j} - F_{z,1,j})=0$ by \eqref{eq:QF}.
Furthermore, $t_{z,j} = C_z \cap \tau_j$ and thus $\cup_{j=1}^4 t_{z,j} = C_z$. This completes the proof.
\end{proof}

As we see from the above description, what the postprocessing has been able to accomplish is a reconstruction of 
{\em upscaled} (or averaged) conservative flux in $C_z$ whose diameter is on the 
order of $h$. However, MsFEM (and for that matter GMsFEM) always utilizes $h$ that is far greater than the 
characteristic scale of $k({\bfs x})$. In this context, the upscaled flux gathered in Lemma~\ref{lem:conservation}
is comparable to entries of $\mathbf{p}$ coming from solving \eqref{eq:msfem} (respectively from solving \eqref{eq:gmsfem} for GMsFEM). Since $\mathcal{V}_{\text{ms},h}$ and $\mathcal{V}_{\text{ems},h}$
are built from the multiscale basis functions containing fine scale information of $k({\bfs x})$, having this
 $\mathbf{p}$ allows for calculation of the approximate pressure down to the level of characteristic scale
 of $k({\bfs x})$. However, the current stage of development does not yet allow the upscaled flux to have this capability.
 
At many practical levels, obtaining upscaled locally conservative fluxes already allows for simulation of multiphase flow and the
transport problem in Sect.~\ref{model}. However, the transport equation \eqref{eq:sat_eqn} must then be discretized
at the same mesh level as where the approximate pressure is solved, i.e., with $h$ that is far greater than the 
characteristic scale of $k({\bfs x})$. Indeed, this practice is sufficient and suitable for some target applications.
Nonetheless, for a large class of problems, such as modeling flows in channelized subsurface with potential 
localized features, achieving an acceptable accuracy does require the simulation to be performed with a
discretization parameter that is comparable with the scale of $k({\bfs x})$. For this to occur, we need the flux to be 
conservative at that finer  scale. In the next subsection, we offer a downscaling procedure that allows this capability by taking advantage of the upscaled  conservative  flux above.

\subsection{A downscaling procedure}
The main driving fact for the downscaling procedure is the realization that after the postprocessing in
Sect.~\ref{sec:postprocess}, we have
$$
\int_{\partial C_z} \widetilde{\bfs v}_h \cdot {\bfs n} \, {\rm d} l = \int_{C_z} q \, {\rm d} {\bfs x}, ~~\forall~~ C_z,
$$
which can be thought of a statement of as compatibility condition in $C_z$. This means we can proceed with
formulating a boundary problem
\begin{equation} \label{eq:Cvbvp}
\begin{cases}
\begin{aligned}
-\nabla \cdot (\lambda k({\bfs x}) \nabla \widetilde{p}_{_{C_z}}) &= q  \hspace*{0.5cm} \text{in} ~~ C_z\\
-\lambda k({\bfs x}) \nabla \widetilde{p}_{_{C_z}}\cdot {\bfs n} &= \widetilde{\bfs v}_h \cdot {\bfs n} \hspace*{0.35cm} \text{on} ~~ \partial C_z.
\end{aligned}
\end{cases} 
\end{equation}
Here $\widetilde{\bfs v}_h = -\lambda k \nabla \widetilde{p}_{\tau,h}$ that is evaluated pointwise
on segments of $\partial C_z$ that are in $\tau$. In fact, this is the same calculation that we did in order to derive  \eqref{eq:loccon}. In this way the compatibility condition is readily satisfied, and thus the solution of
\eqref{eq:Cvbvp} exists. Similar to the postprocessing in Sect.~\ref{sec:postprocess}, nonuniqueness
of the solution is of no concern since our interest is to gather $-\lambda k \nabla \widetilde{p}_{_{C_z}}$
from \eqref{eq:Cvbvp}.

In our implementation, we numerically solve \eqref{eq:Cvbvp} for every $C_z$ in $\Omega$ with
the discretization of $C_z$ using the same mesh configuration as that of $\tau$ when numerically solving
\eqref{eq:msbasis}. Obviously, the associated mesh parameter should be smaller than $h$ and comparable
to the characteristic scale of $k({\bfs x})$. We use the standard Galerkin finite element method \eqref{eq:stfem} for both of these problems. In addition, the numerical solution of \eqref{eq:Cvbvp} is further postprocessed 
(see \cite{bg13} for the description) to get a locally conservative flux on the finer scale. Once this is done for all
$C_z$, we obtain a the downscaled locally conservative flux in $\Omega$, which is in turn used
in the simulation of transport equation \eqref{eq:sat_eqn}.

We should like to emphasize that the main advantage of the proposed series of upscaled and downscaled
postprocessings is due to their independence of each other. The procedure is immediately parallelizable, fits well in 
the framework of CPU-GPU clusters, while the cost for communication is  minimal. In this respect, they are indeed 
in the same spirit as the multiscale basis functions calculations.

\section{Numerical examples}
\label{numerical}

A variety of numerical examples that validate the performance of the proposed method are presented in this
section. In particular, we perform a convergence study to
address the convergence of the flux to a fine-scale reference solution as the number of enriched basis functions is increased. In general, the improvement of the solution is shown to be dependent on the type of permeability coefficient and the level of enrichment. Applications to single and multi-phase flow are also presented in this section. 

We consider two distinct permeabilities within this section. Their spatial profiles are shown in Fig.~\ref{fig:perm} with the left field shown in physical scale, and the right in log scale. All of the
applications presented in this section use the domain $\Omega = [0, 1] \times [0, 1]$. The left plot is a deterministic, high-contrast coefficient with abrupt transitions between regions of low and high permeability. This permeability is posed on a fine mesh of $100 \times 100$ elements. The right plot in Fig.~\ref{fig:perm} is a single realization of a random, channelized permeability that is posed on a $120 \times 120$ mesh.    
Both examples of permeability exhibit high-contrast features, which can make solving Eq.~\eqref{eq:p_eqn} a demanding task. The ratio between the  maximum ($k_\text{max}$) and minimum value ($k_\text{min}$) of $k(\bfs{x})$ can be thought of as representing the condition number of the resulting linear system in the finite element method. Compared with other methods such as the finite volume element method or mixed finite element method, continuous Galerkin finite element method has an advantage when combined with the postprocessing because the resulting linear systems can be easier to solve \cite{bg13}.

\begin{figure}[t]
  \begin{center}
      \includegraphics[width=1\textwidth]{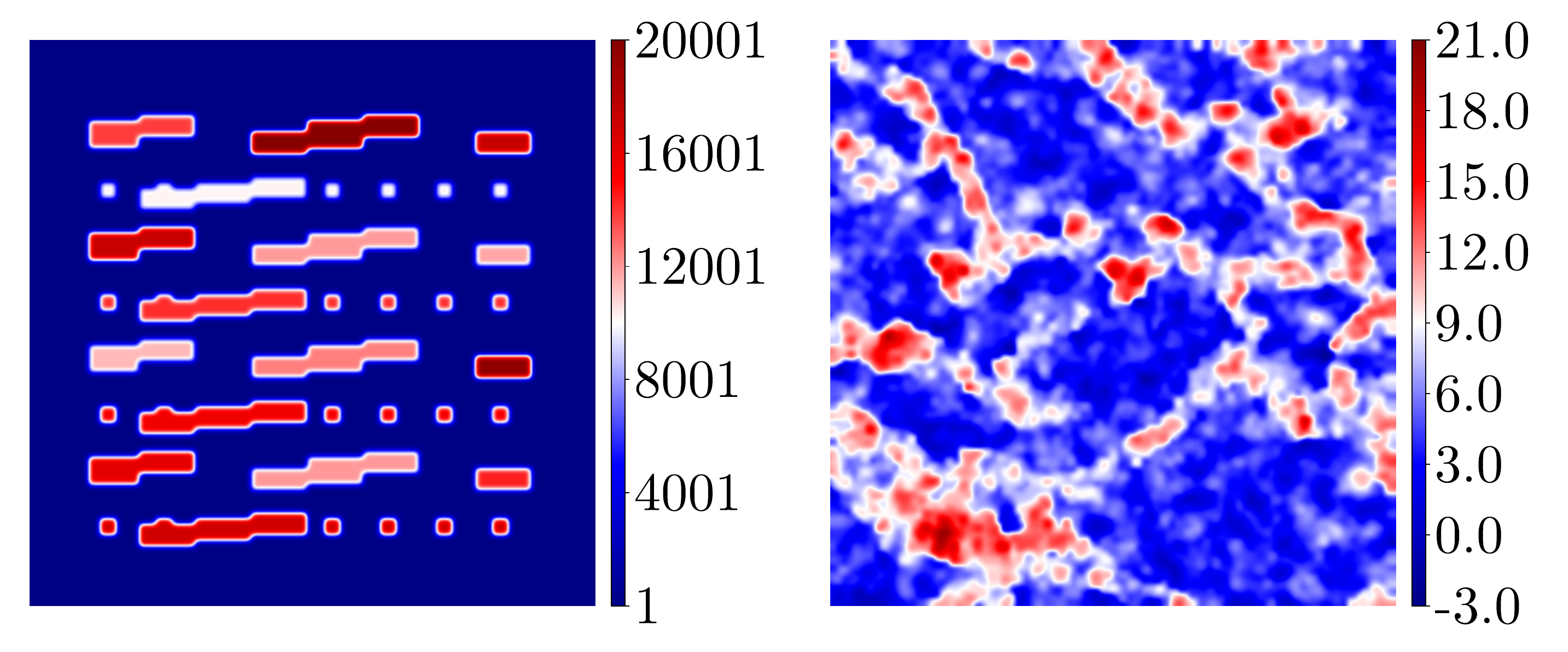}
  \end{center}
  \vspace*{-0.7cm}
  \caption{Permeabilities used in the examples. The plot on the left is shown in physical scale, while the right is shown in log scale with the left being referred to as deterministic
  (with $k_\text{max}/k_\text{min} \approx 2\times 10^4$) and the right as random
  (with $k_\text{max}/k_\text{min} \approx 1.8\times10^6$). }
  \label{fig:perm}
\end{figure}

\subsection{Single-phase flow}
In this subsection, we compare the velocities obtained from postprocessing Eq.~\eqref{eq:p_eqn} to be coupled to the transport problem in Eq.~\eqref{eq:sat_eqn}. To solve the pressure equation in \eqref{eq:p_eqn} we use Dirichlet conditions $p_L = 1$ and $p_R = 0$ on the left and right boundaries of $\Omega$, along with no flow (zero Neumann) conditions on the top and bottom boundaries. We also assume that there is no external forcing, i.e., that $q=0$. Table \ref{tab:fluxerror} shows the relative error of the dowscaled velocities obtained by the postprocessing procedure in Sect.~\ref{sec:postprocess}. We observe that for the deterministic permeability, an appropriate choice of $L_z$ (the number of basis functions chosen for each respective node) significantly reduces the error associated with the downscaled velocities. For the random field, we see that the error reduction is less pronounced, yet that a clear error decline is still evident. This difference is due to the nature of the spectrum associated with the eigenvalue problem in Eq.~\eqref{eigproblem} and, in particular, the eigenvalue behavior corresponding to the deterministic field lends itself to a more pronounced error decline (see, e.g., \cite{egw11}). For this initial set of results all MsFEM/GMsFEM  solutions were computed on $10 \times 10$ coarse mesh.

\begin{table}
\begin{center}
\caption{Comparison of relative velocity errors for different levels of enrichment. The $L^2$ norm of the downscaled velocity was computed for each of the cases below and compared with the fully-resolved reference values. }
\vspace*{0.2cm}
\begin{tabular}{| c | c | c |}
\hline
 & Deterministic & Random  \\ \hline
$L_z=1$ & 1.458 & 0.401    \\ \hline
$L_z=2$ & 0.480 & 0.320     \\ \hline
$L_z=4$ & 0.203 & 0.283     \\ \hline
$L_z=6$ & 0.199 & 0.274    \\ \hline
\end{tabular}\label{tab:fluxerror} 
\end{center}
\end{table}

For a visual representation of the improvement, the computed velocity is plotted on global and localized regions for the deterministic and random permeability fields in Figs.~\ref{fig:quiver_hc} and \ref{fig:quiver_stan}, respectively. In either figure, the left plots show the reference fine scale velocity on the global domain, and on a subregion where the permeability has a large change in value. The center plots show the resulting downscaled velocity on the same regions resulting from the MsFEM (i.e., the case when $L_z=1$). We note that significant differences are noticeable between the reference values and this initial case. Finally, the right plots show the computed velocity corresponding GMsFEM with  $L_z=4$ on the same regions. Figs.~\ref{fig:quiver_hc} and \ref{fig:quiver_stan} clearly illustrate how the enrichment produces a more accurate representation of the velocity on the fine scale. In addition, this increase in accuracy is particularly evident in regions where the permeability contrast is most extreme. We also note that there are some negative values for the horizontal velocity in the random permeability due to the channelized nature of the permeability in some regions.

\begin{figure}[htb]
  \begin{center}
  	  \includegraphics[width=0.9\textwidth]{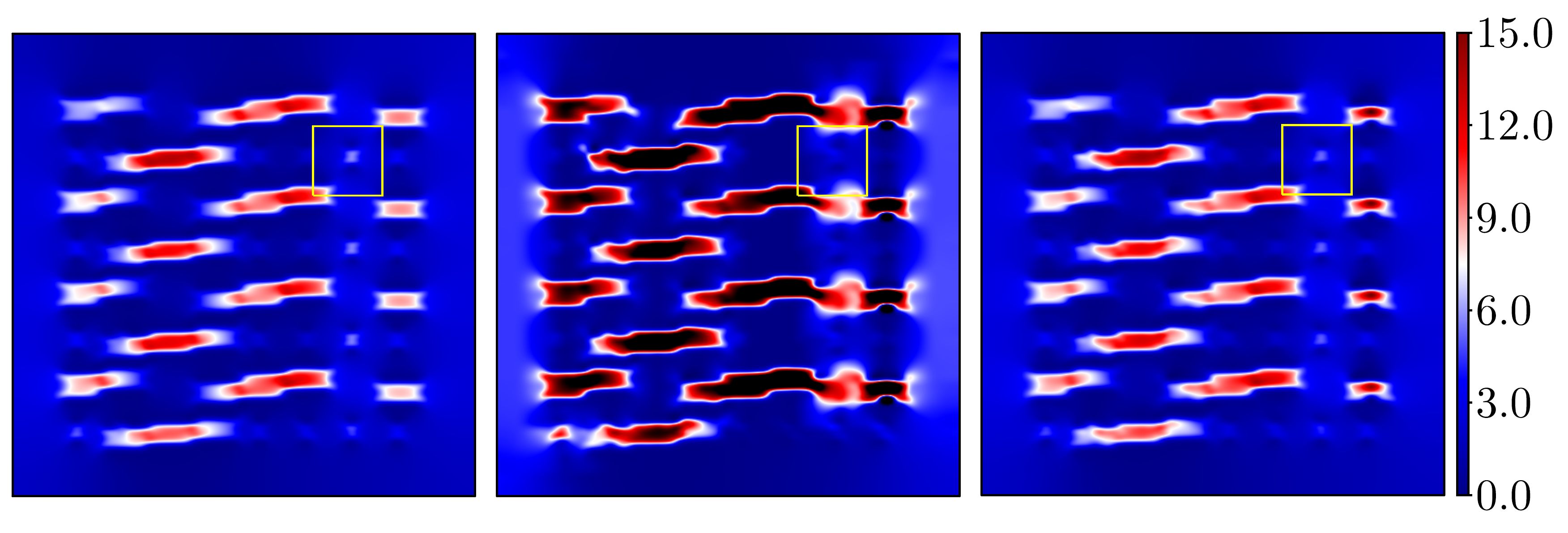}
  	   \includegraphics[width=0.9\textwidth]{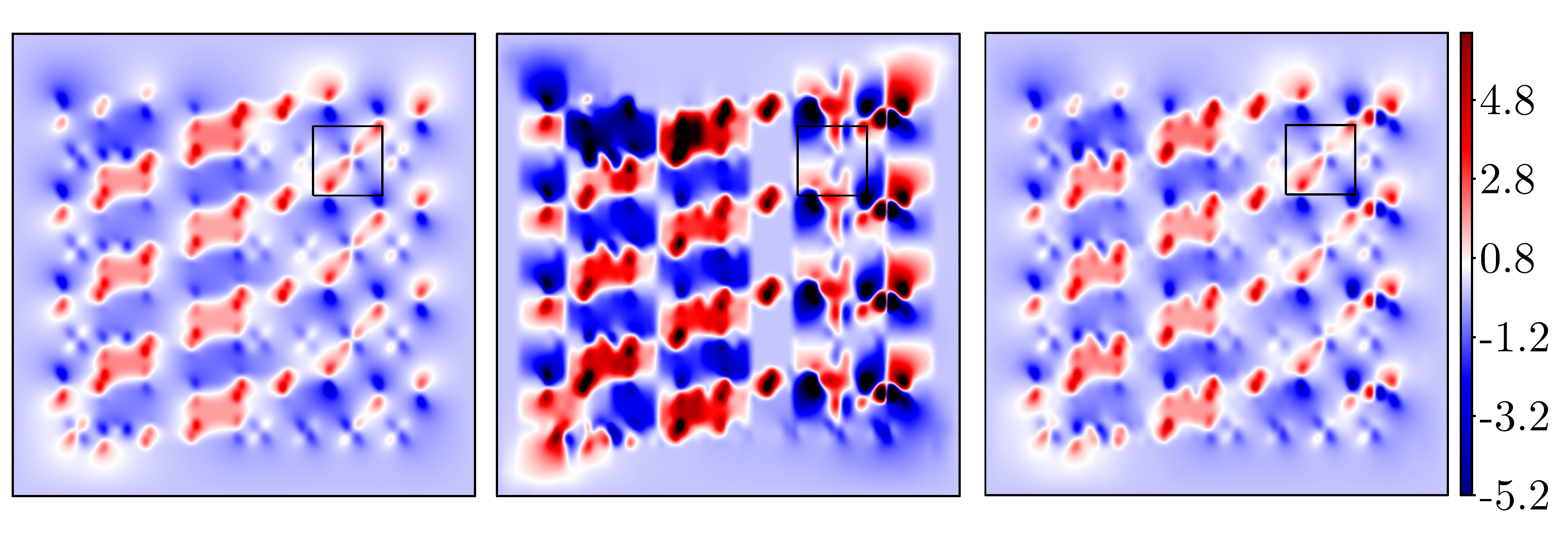}
  	   
  	   \hspace*{-1.0cm}
      \includegraphics[width = 0.25\textwidth, keepaspectratio = true]{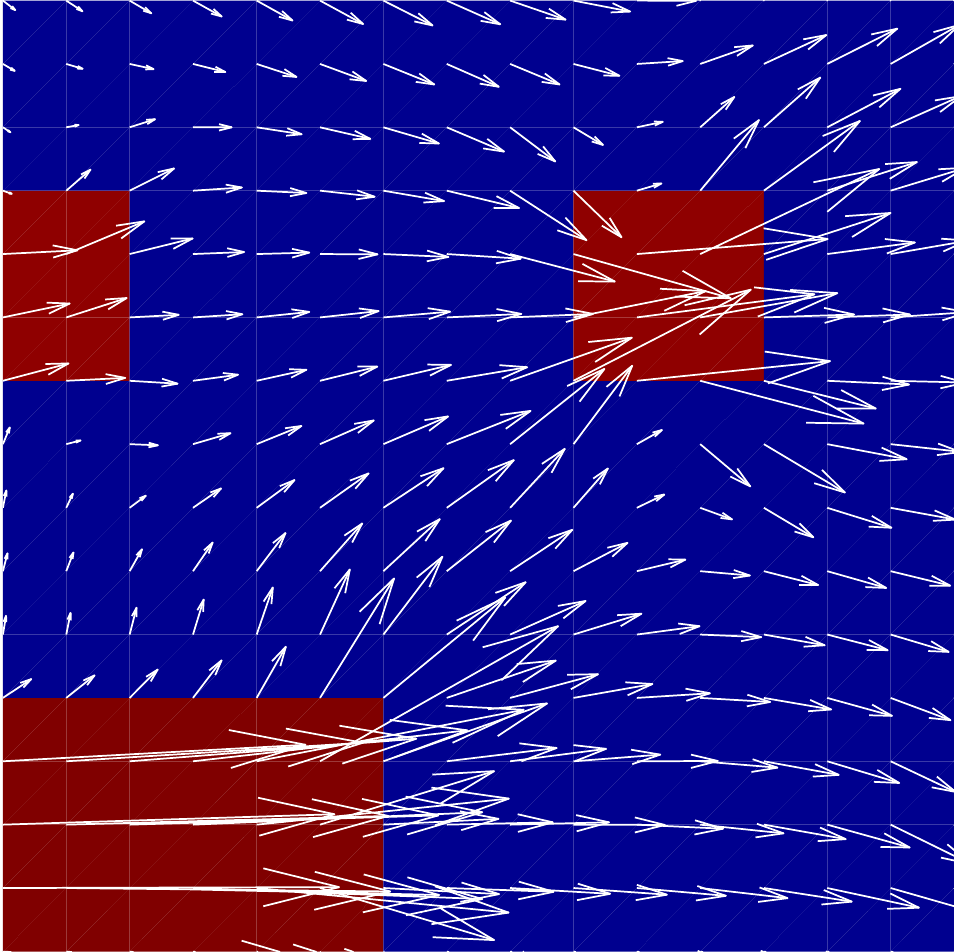} 
      \hspace*{0.3cm}
      \includegraphics[width = 0.25\textwidth, keepaspectratio = true]{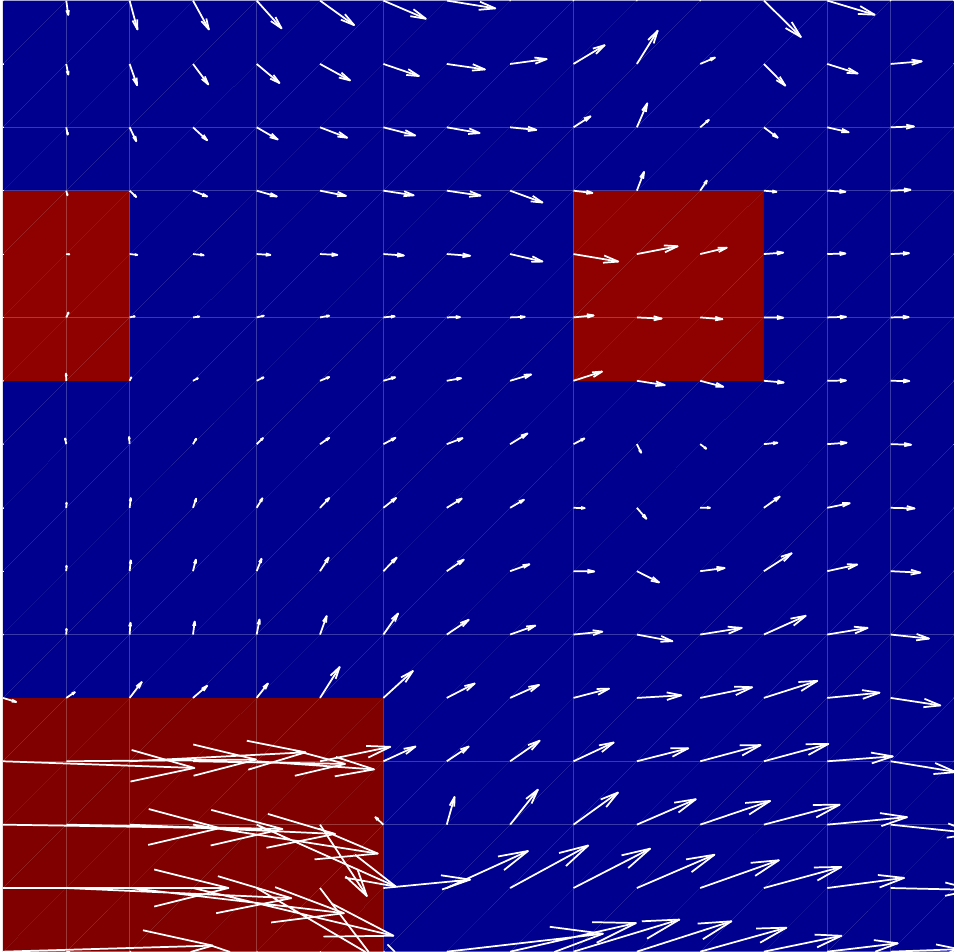} 
      \hspace*{0.3cm}
      \includegraphics[width = 0.25\textwidth, keepaspectratio = true]{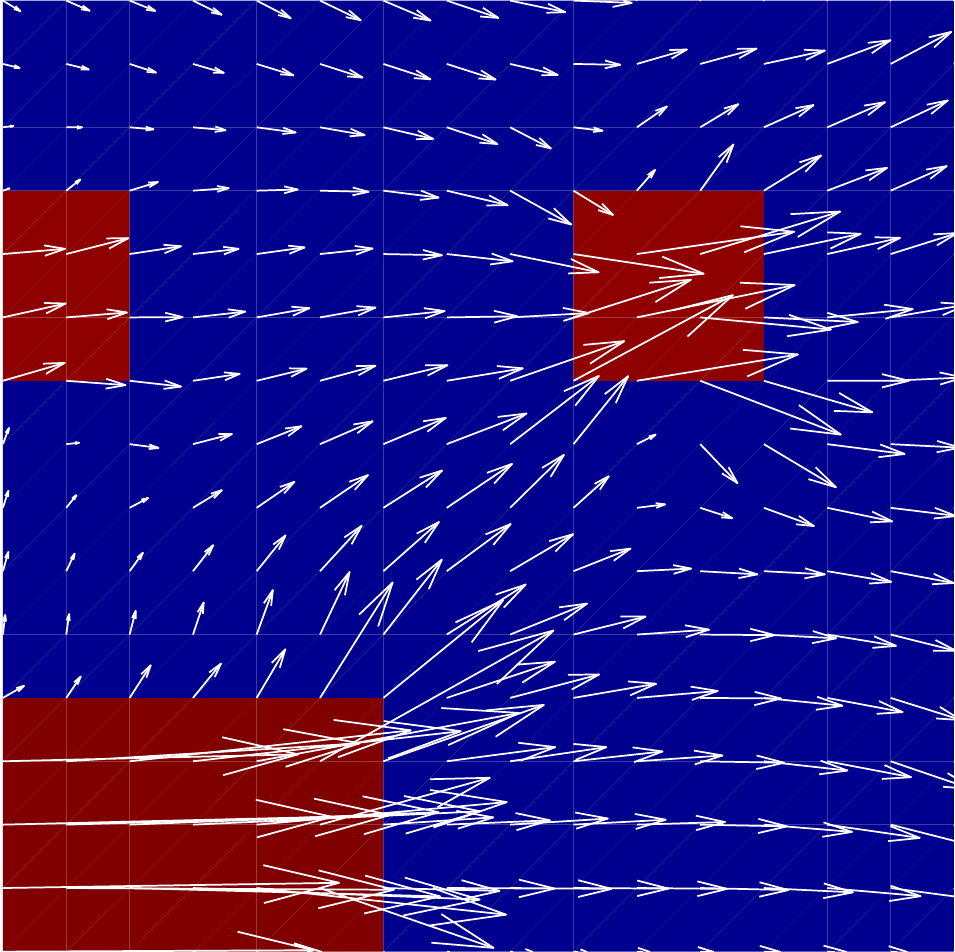}
  \end{center}
  \vspace*{-0.7cm}
  \caption{Velocity computed using three methods. The top two plots show the velocity profile on the whole domain using deterministic permeability with the reference on the left, $L_z=1$ in the center, and $L_z=4$ on the right. The bottom set of plots shows the velocity on the region shown in black and yellow on the top two plots.  }
  \label{fig:quiver_hc}
\end{figure}

\begin{figure}[htb]
  \begin{center}
  \includegraphics[width=0.9\textwidth]{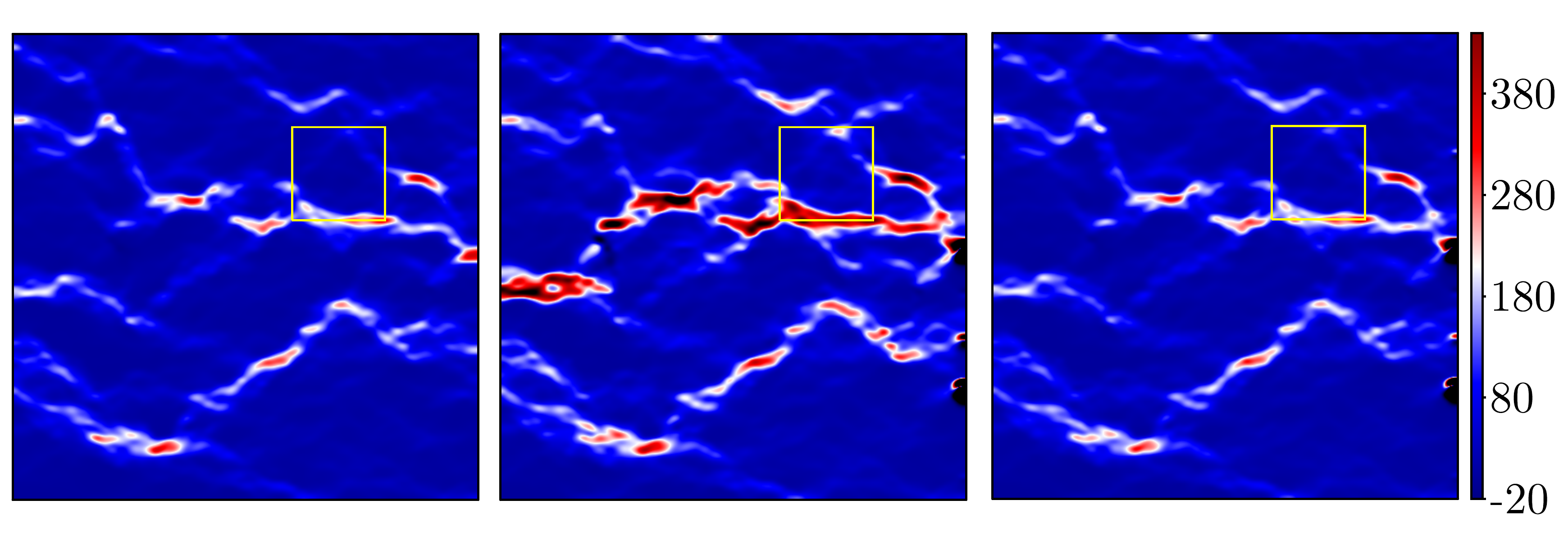}
  	   \includegraphics[width=0.9\textwidth]{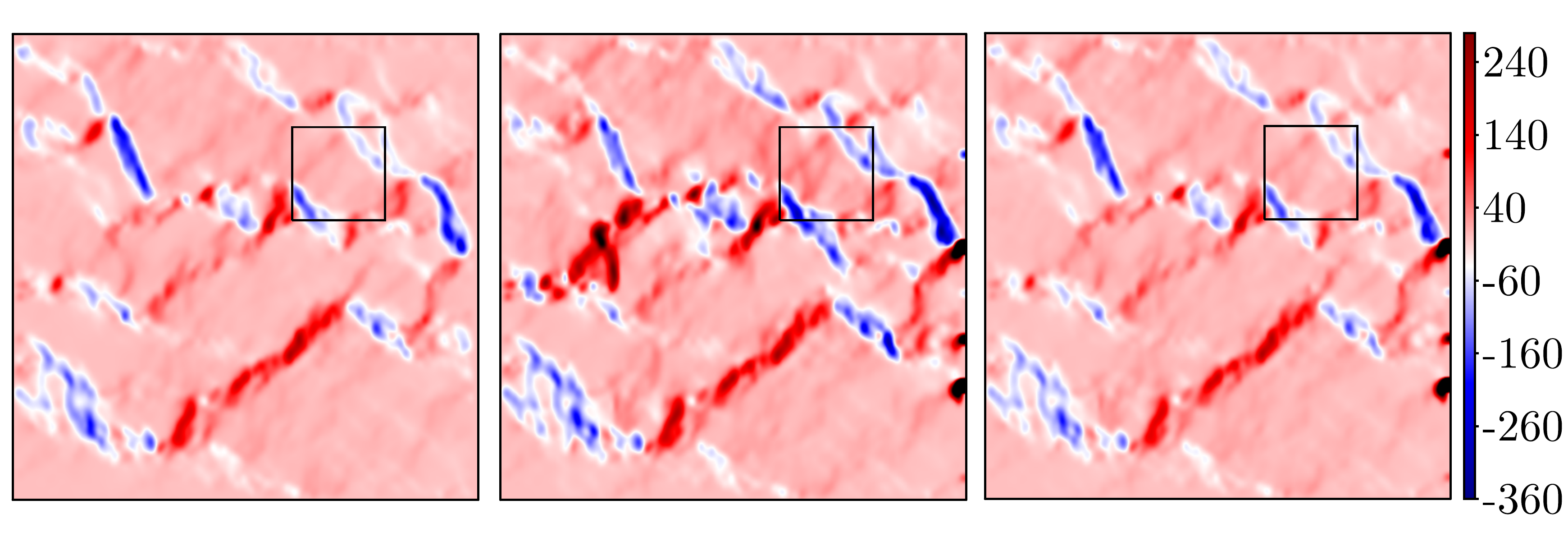} \hspace*{-0.9cm}
      \includegraphics[width=0.25\textwidth]{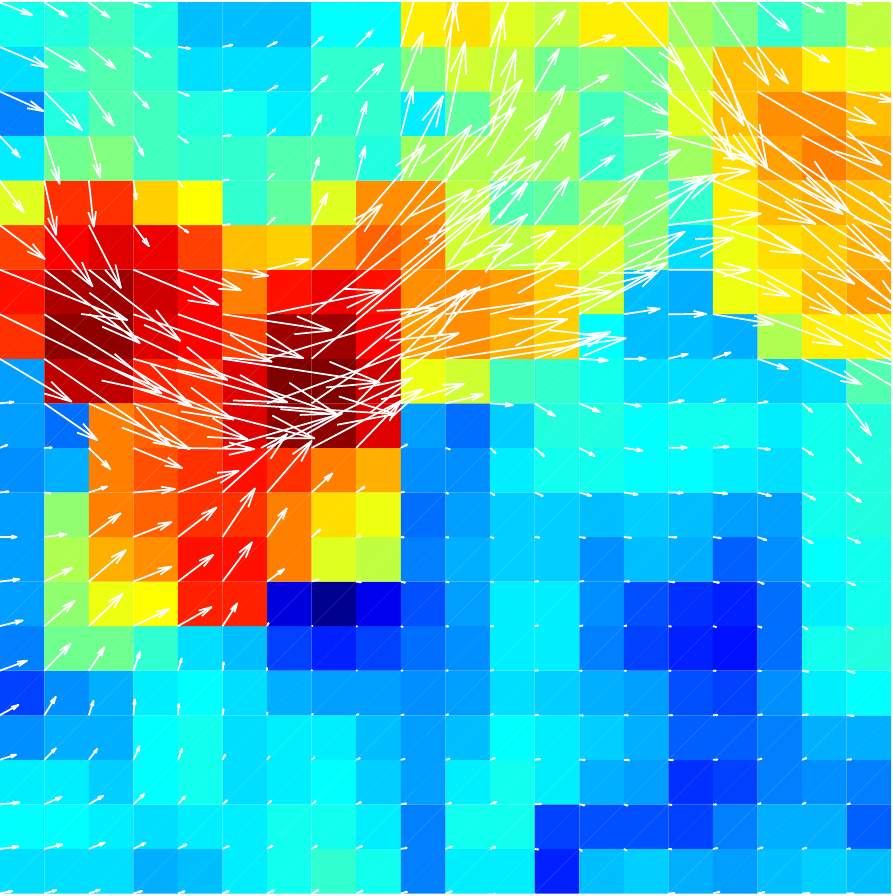}\hspace*{0.3cm}
      \includegraphics[width=0.25\textwidth]{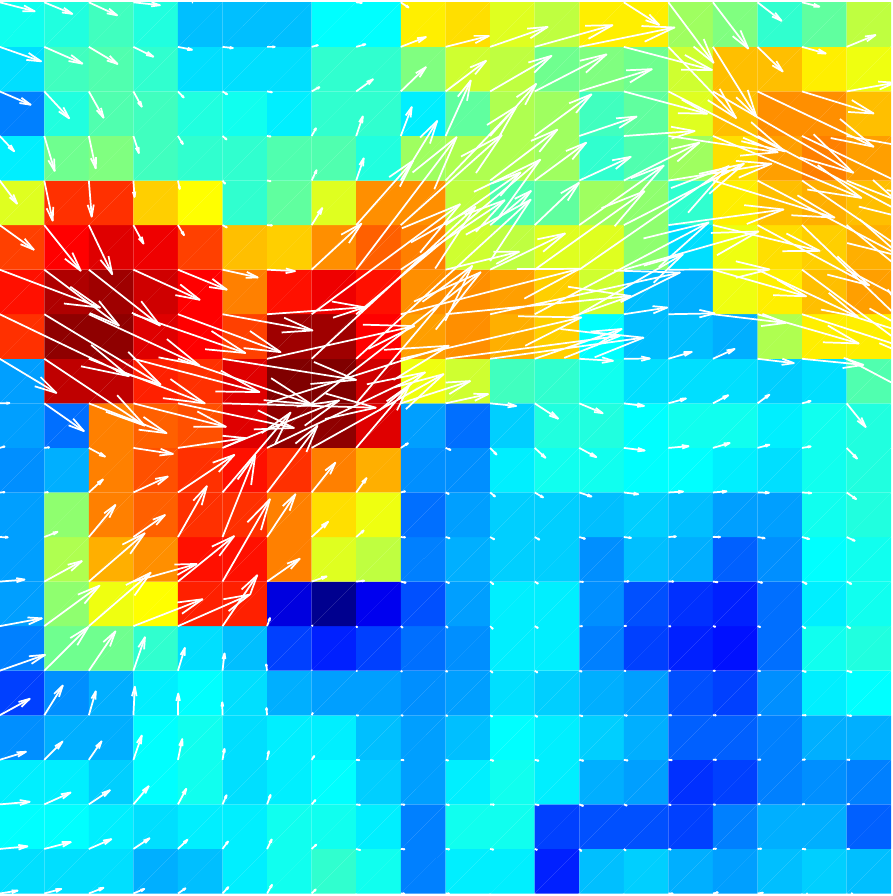}\hspace*{0.3cm}
      \includegraphics[width=0.25\textwidth]{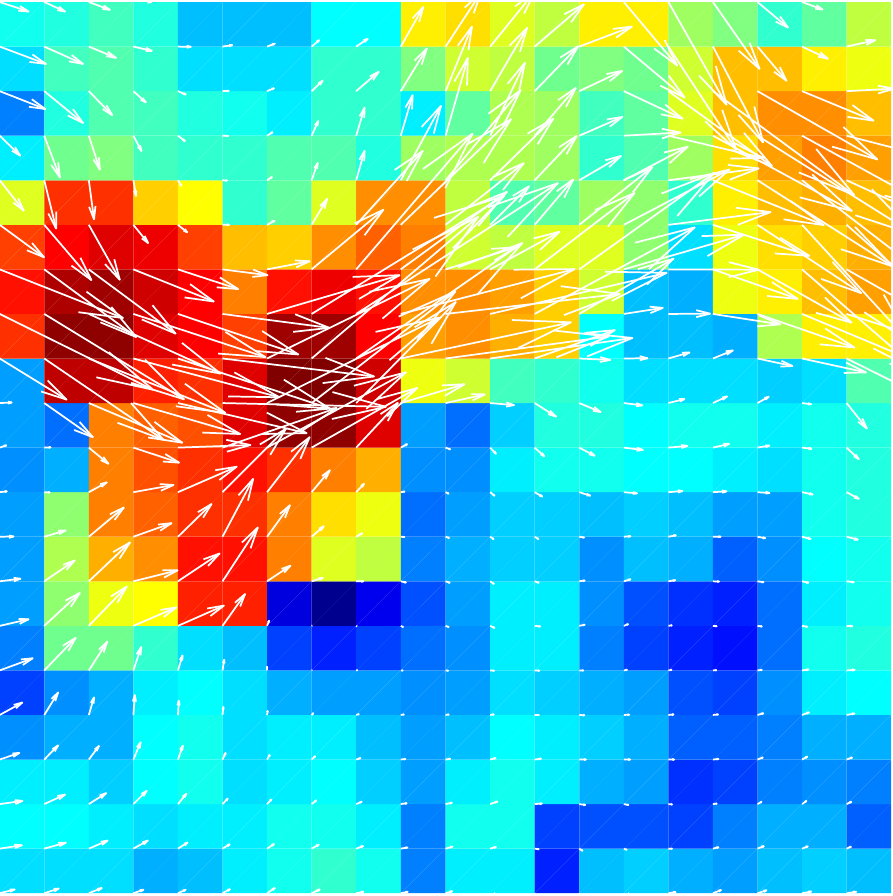}
  \end{center}
  \vspace*{-0.7cm}
  \caption{Velocity computed using three methods. The top two plots show the velocity profile on the whole domain using random permeability with the reference on the left, $L_z=1$ in the center, and $L_z=4$ on the right. The bottom set of plots shows the velocity on the region shown in black and yellow on the top two plots.}
  \label{fig:quiver_stan}
\end{figure}

\subsection{Two-phase convergence}
In solving Eq.~\eqref{eq:sat_eqn} we use quadratic relative permeability curves $k_{rw} = S^2$ and $k_{ro} = (1-S)^2$, along with $\mu_w = 1$ and  $\mu_o = 5$ for the fluid viscosities. For the initial condtion, the value at the left edge is set as $S = 1$ and we assume $S(x,0) = 0$ elsewhere. 
Results for application of the method to two-phase flow are shown in Figs.~\ref{fig:tp_hc} and \ref{fig:tp_stan} using the same permeabilities from Fig.~\ref{fig:perm}. Each of the plots shows the reference saturation in the first row at three time levels, $L_z=1$ on the second row at the same three time levels, $L_z=2$ in the third row, and $L_z=4$ in the last row. The improvement in the deterministic case is significant as can be seen in the Fig.~\ref{fig:tp_hc} and verified with the relative error shown in Fig.~\ref{fig:sat_error}. And as expected, for the random field we see a noticeable (but less pronounced) error decline from the profiles in Figs.~\ref{fig:tp_stan}, and the relative error plots in Fig.~\ref{fig:sat_error}. We mention that in Figs.~\ref{fig:tp_hc} and \ref{fig:tp_stan}, the solutions corresponding to the case when $L_z = 4$ are essentially indistinguishable from the fine scale reference solutions. We reiterate that the size of the resulting linear systems from the pressure equation in Eq.~\eqref{eq:p_eqn} will depend on $L_z$ and the coarse mesh size. The system for the reference cases are of size $10201^2$ for the deterministic permeability and $14641^2$ for the random permeability. Since the coarse mesh is $10 \times 10$ for both cases, using $L_z=1, L_z=2,$ and $L_z=4$ results in linear systems of size $121^2$, $202^2,$ and $364^2$, respectively.

\begin{figure}[htb]
  \begin{center}
      \includegraphics[width=0.9\textwidth]{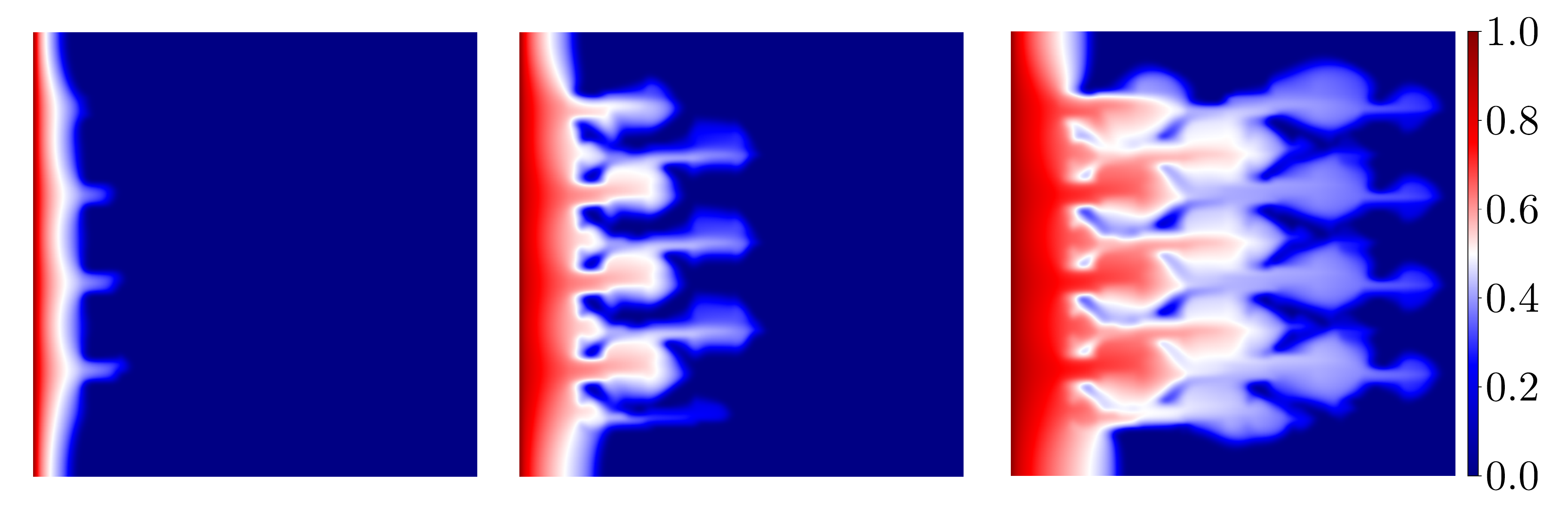}
      \includegraphics[width=0.9\textwidth]{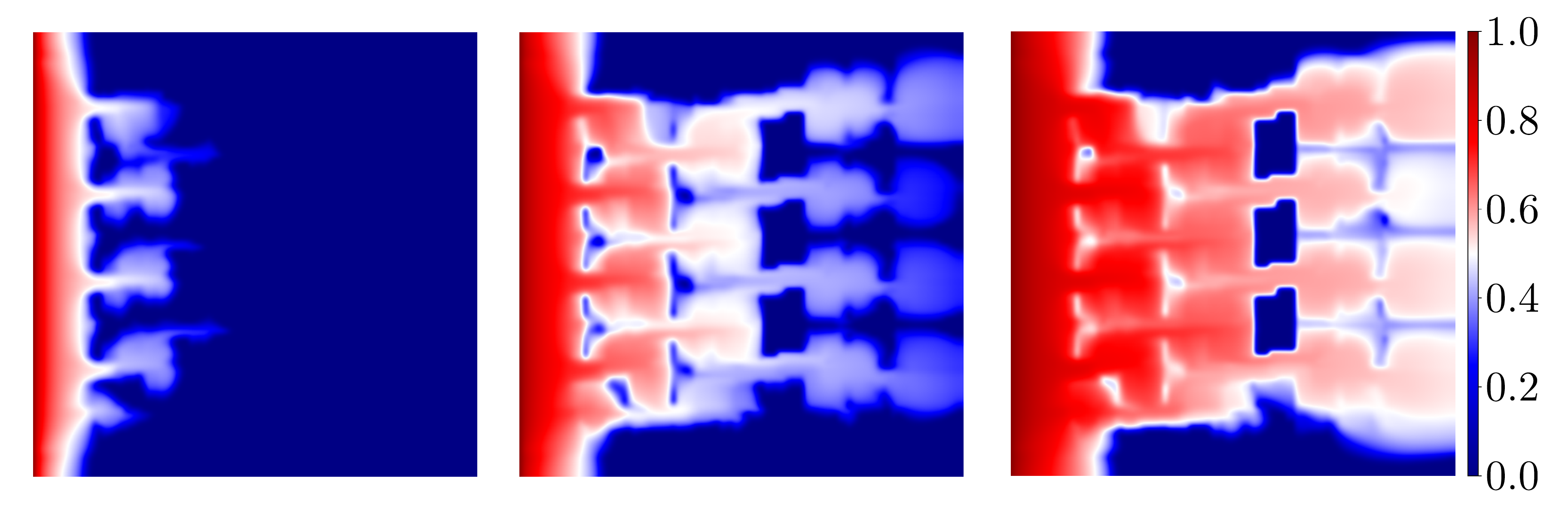}
      \includegraphics[width=0.9\textwidth]{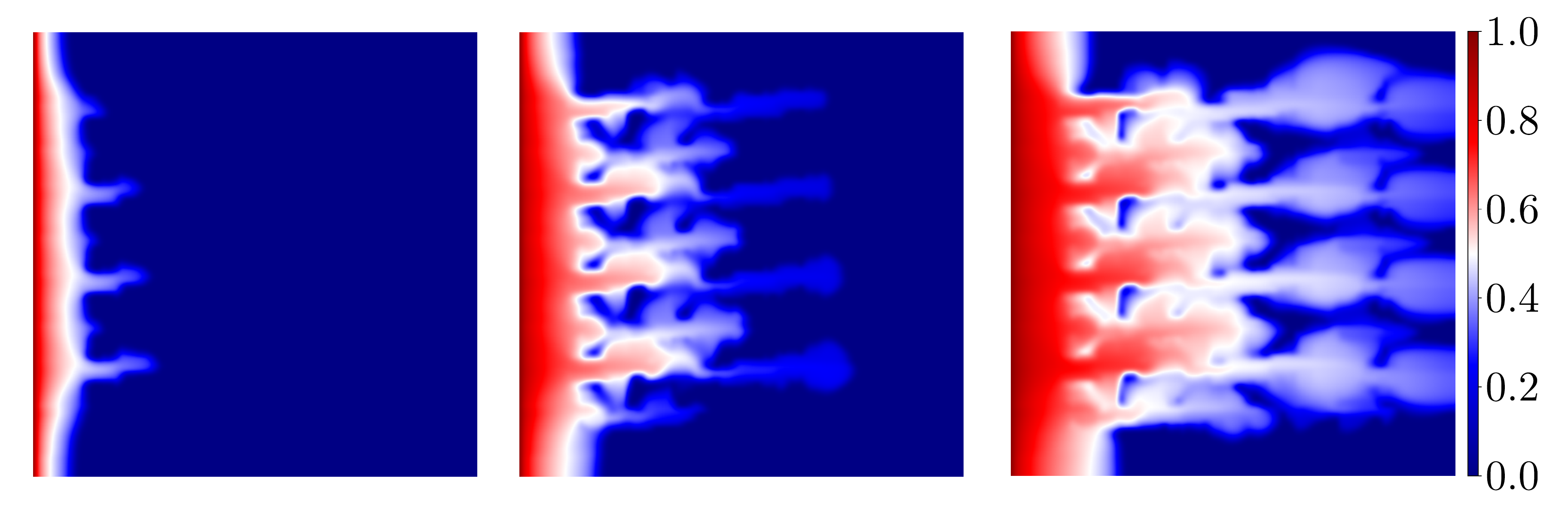}
      \includegraphics[width=0.9\textwidth]{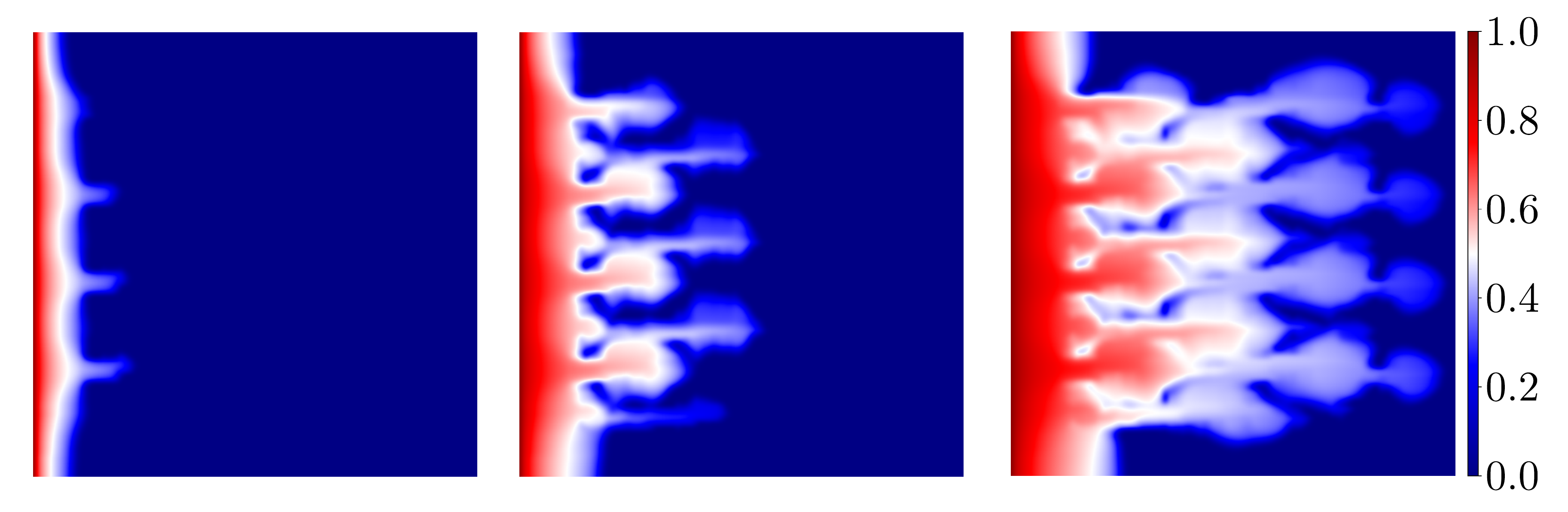}
  \end{center}
  \vspace*{-0.7cm}
  \caption{Deterministic permeability. The reference saturation is shown in the top row at three different time levels. The second through fourth row are $L_z=1, L_z=2$, and $L_z=4$ respectively.}
  \label{fig:tp_hc}
\end{figure}

\begin{figure}[htb]
  \begin{center}
      \includegraphics[width=0.9\textwidth]{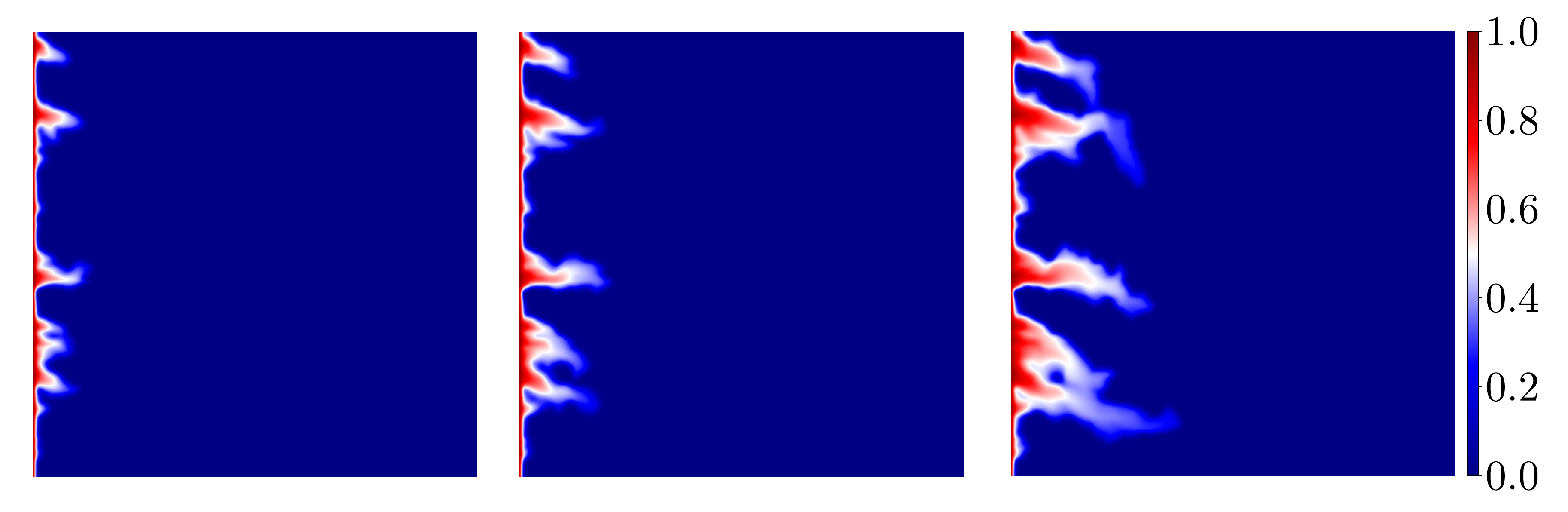}
      \includegraphics[width=0.9\textwidth]{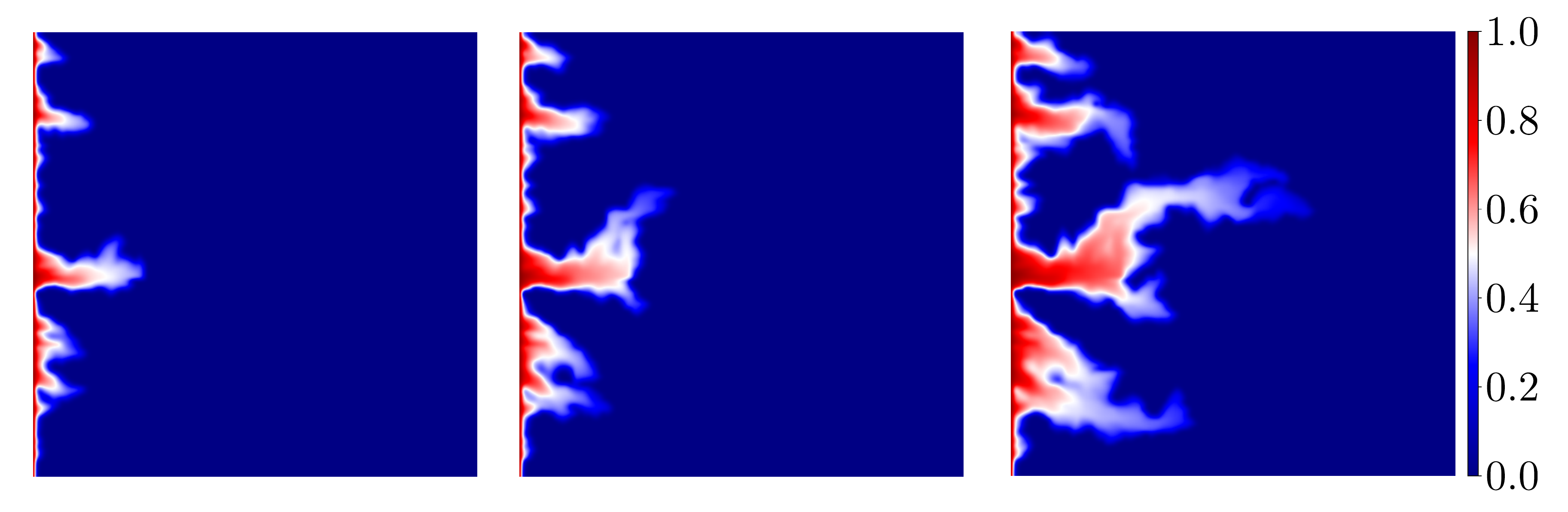}
      \includegraphics[width=0.9\textwidth]{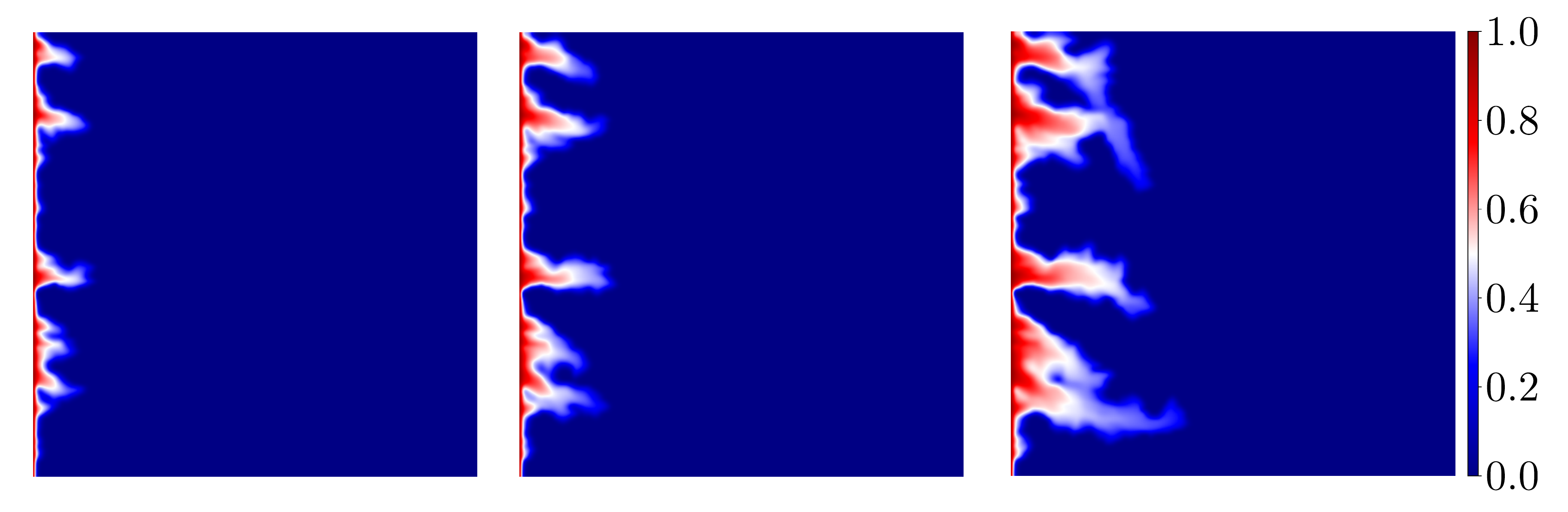}
      \includegraphics[width=0.9\textwidth]{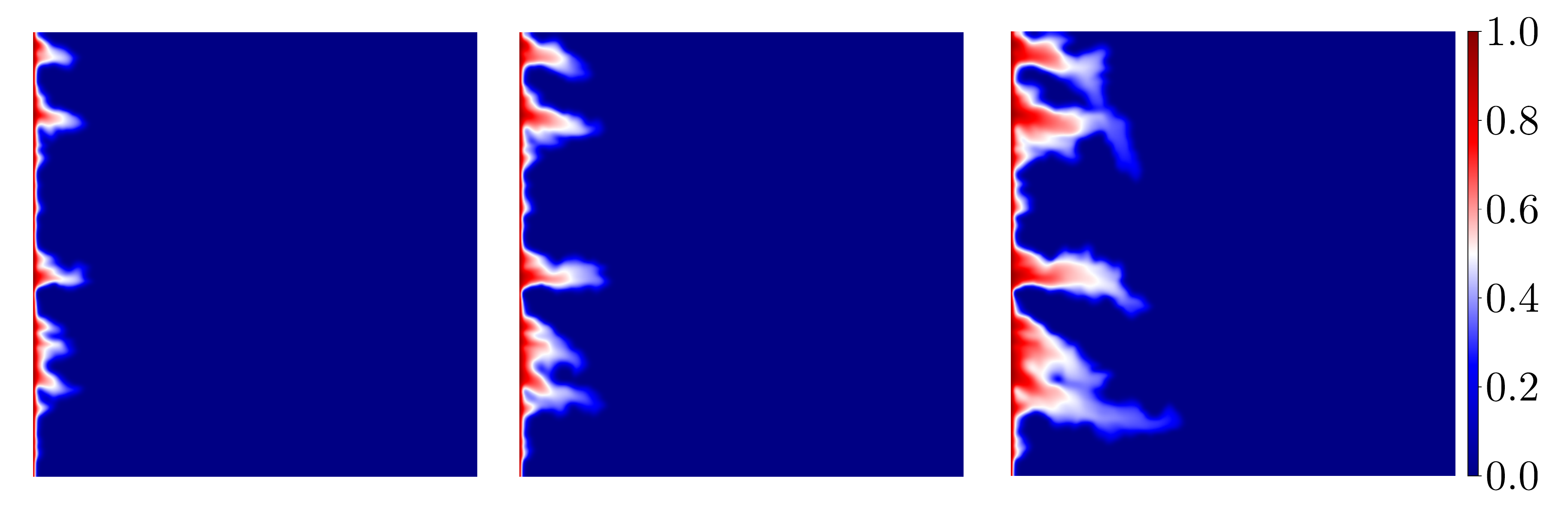}
  \end{center}
  \vspace*{-0.7cm}
  \caption{Random permeability. The reference saturation is shown in the top row at three different time levels. The second through fourth row are $L_z=1, L_z=2$, and $L_z=4$ respectively. }
  \label{fig:tp_stan}
\end{figure}

The $L^2$ error of the saturation profiles offered in Figs.~\ref{fig:tp_hc} and \ref{fig:tp_stan} is presented in Fig.~\ref{fig:sat_error}. As was evident from the velocity results, the error is improved by increasing the number of enrichment functions up to a certain threshhold, and then the reduction is minimal as more functions are added. This suggests the number of enrichment functions should be chosen to minimize the overhead in the calculation. As seen above, the size of the linear system to be solved grows quickly as the number of functions is increased and, at some point, there is little to be gain by adding to the enrichment.

\begin{figure}[htb]
  \begin{center}
      \includegraphics[width=0.45\textwidth]{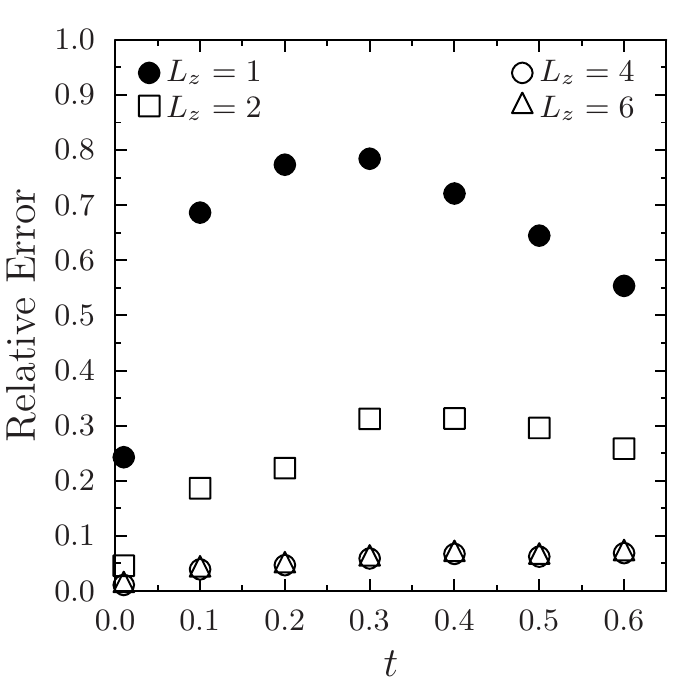} \hspace*{0.6cm}
      \includegraphics[width=0.45\textwidth]{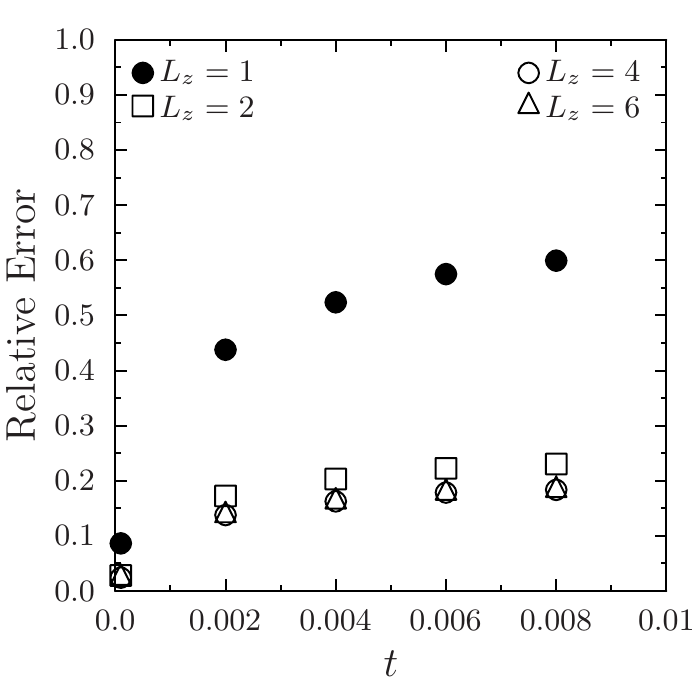}
  \end{center}
  \vspace*{-0.7cm}
  \caption{Comparison of the $L^2$ error of the saturation for deterministic (left) and random (right) as a function of time.}
  \label{fig:sat_error}
\end{figure}

For a comparison on the mesh size, the deterministic permeability was used to simulate two-phase flow with $L_z=4$ using a $10 \times 10$ and $25 \times 25$ coarse mesh. Analogous results are presented for the random field, except that a $30 \times 30$ coarse mesh was used for the refinement. The results are shown in Fig.~\ref{fig:h_error}. As expected, the error is reduced with a finer coarse mesh at the same level of enrichment. This is due to the fact that the coarse mesh is finer and the behavior of the permeability is captured to an acceptable degree by fewer basis functions.

\begin{figure}[htb]
  \begin{center}
	  \includegraphics[width=0.45\textwidth]{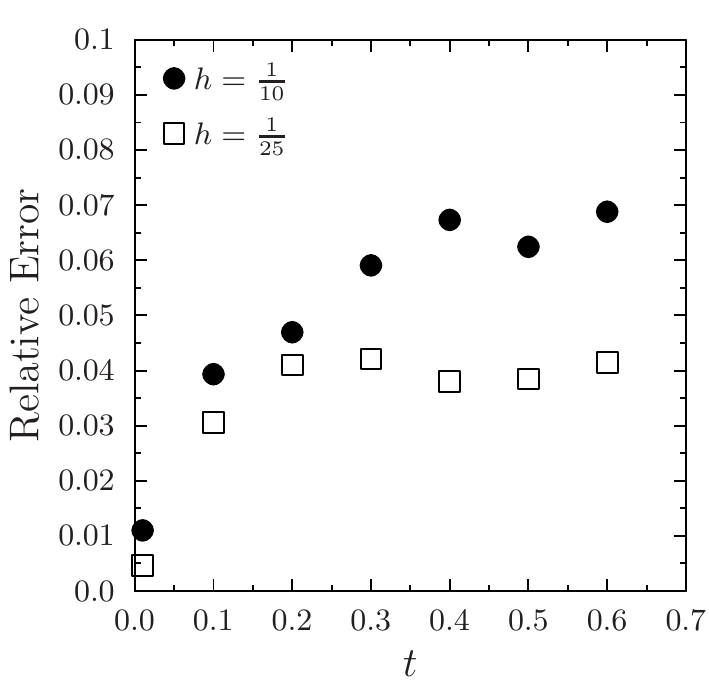} \hspace*{0.6cm}
      \includegraphics[width=0.45\textwidth]{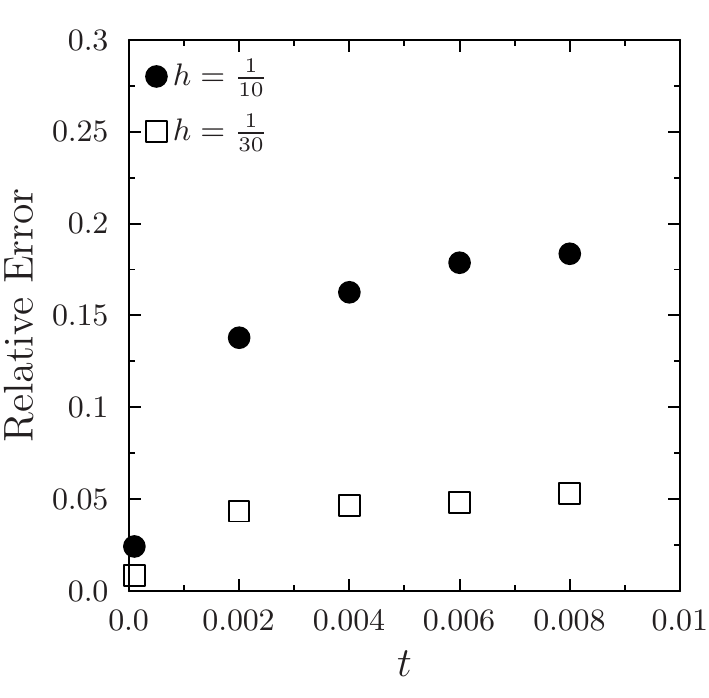}  
  \end{center}
  \vspace*{-0.7cm}
  \caption{Error comparison as the coarse mesh is changed with $L_z=4$ and both permeabilities. Filled circles show the error behavior as a function of time with a $10 \times 10$ coarse mesh. Squares show the error behavior with a 
finer coarse mesh. }
  \label{fig:h_error}
\end{figure}

\section{Concluding remarks}
\label{conclusion}
In this paper we present a method for the construction of locally conservative flux fields from GMsFEM pressure solutions. The method hinges on an element-based postprocessing procedure in which an independent set of $4 \times 4$ linear systems need to be solved in order to compute the flux values. In order to test the performance of the method we consider two permeability coefficients that exhibit distinct classes of heterogeneity. In addition, we apply the proposed method a two-phase flow model in which the flux values are coupled to a hyperbolic transport equation. The increase in accuracy associated with the computation of the GMsFEM pressure solutions is inherited by the associated flux fields and saturation solutions, and is shown to closely depend on the size of the reduced-order systems. In particular, the addition of more basis functions to the enriched, multiscale solution space yields solutions that more accurately capture the behavior of the fine scale model. In the future we wish to address related techniques in which the computation of conservative flux fields is built within the framework of generlized multiscale finite volume element methods.


\section*{References}
\bibliographystyle{siam}
\bibliography{paper2}


\end{document}